\newcommand{\Tau}{\mathcal{T}}
\newcommand{\R}{\mathbb{R}}
\newcommand{\Px}{\mathbb{P}}
\newcommand{\Fx}{\mathbb{F}}
\newcommand{\Ex}{\mathbb{E}}
\newcommand{\F}{\mathcal{F}}
\newcommand{\1}{\mathds{1}}
\newcommand{\Afgh}{{\bf(A$_{fgh}$)}}
\newcommand{\Aghgrow}{{\bf(A$_{\overline{g}h}$)}}
\newcommand{\Afgoneside}{{\bf(A$_{one}$)}}
\newcommand{\Ahlam}{{\bf(A$_{h\lambda}$)}}
\newcommand{\Alya}{{\bf(A$_{Lya}$)}}
\newcommand{\Agrow}{{\bf(A$_{grow}$)}}
\newcommand{\Agrows}{{\bf(A$'_{grow}$)}}
\newcommand{\Ai}{{\bf(A$_{ini}$)}}
\newcommand{\Afg}{{\bf(A$_{\overline{f}\overline{g}h}$)}}
\newcommand{\Aab}{{\bf(A$_{\alpha\beta}$)}}
\newcommand{\Ahlr}{{\bf(A$_h$)}}
\definecolor{Red}{rgb}{1.00, 0.00, 0.00}
\definecolor{Blue}{rgb}{0.00, 0.00, 1.00}
\title{Mean field analysis of interacting network model with jumps}
\author[1,\Letter]{Zeqian Li}
\email{lzq7890@mail.ustc.edu.cn}
\runningtitle{Mean field analysis of interacting network model with jumps}
\runningauthor{Li}
\address{School of Mathematical Sciences, University of Science and Technology of China, Hefei, 230026, China.}
\abstract{
  This paper considers an $n$-particle jump-diffusion system with mean filed interaction, where the coefficients are locally Lipschitz continuous. We address the convergence as $n\to\infty$ of the empirical measure of the jump-diffusions to the solution of a deterministic McKean-Vlasov equation. The strong well-posedness of the associated McKean-Vlasov equation and a corresponding propagation of chaos result are proven. In particular, we provide also precise estimates of the convergence speed with respect to a Wasserstein-like metric.
}
\keywords{Mean-field theory; McKean-Vlasov equations; FPK equations; propagation of chaos; particle system}
\begin{document}

\maketitle %% -- make title page

\section{Introduction}

The interaction of particle systems is commonly observed in both physics and biological science. The behavior of such systems as the number of particles strength approaches infinity has sparked widespread interest, and the related study has been a long-standing issue. For instance, the interacting particles can stand for the stars in a galaxy\textsuperscript{\cite{dehnen2011n}}, the units of a neural network\textsuperscript{\cite{sirignano2020mean}} or the animals of a cluster\textsuperscript{\cite{bolley2011stochastic}}.

Mean field analysis is utilized to investigate systems that comprise a large number of interacting elements. When the scale of the particle system approaches infinity, the behavior of each element is assumed to be independent of others except for an average effect caused by the collective behavior of the system as a whole. Mean field analysis is widely employed in various fields, for example, describing nuclear structure and low-energy dynamics\textsuperscript{\cite{bender2003self}}, simulating bio-inspired neuronal networks\textsuperscript{\cite{touboul2014propagation, delarue2015particle}}, and modeling interbank lending and borrowing activities\textsuperscript{\cite{bo2015systemic}}. And its mathematical properties have been studied from various perspectives in recent years (see Refs.\cite{liu2023large, guillin2022uniform} for example). To describe such mean field phenomenon, the following mathematical framework is considered: the dynamics of the system with $n$ particles are governed by $n$ SDEs, and the mean field interactions can be expressed as a dependence on the empirical measure of the coefficients related to the equation. As $n$ tends to infinity, the dynamic of the limit system follows the McKean-Vlasov equation where the coefficients exhibit a natural dependence on the probability distribution associated with its solution. 

The convergence result for the empirical measure with respect to the $n$-particle system is called the propagation of chaos property. The concept of propagation of chaos was first proposed by Ref.\cite{kac1956foundations} to describe the asymptotic independence between particles that arises as the spatially extension tends to infinity. This convergence implies that the statistical properties of the system can be effectively described by the limiting measure, allowing for simplifications in the analysis and modeling of the system. For a more comprehensive overview, one can refer to Ref.\cite{sznitman1991topics}.

Since the seminal work Ref.\cite{mckean1966class}, there are various studies on the well-posedness of the McKean-Vlasov equations and the propagation of chaos for the mean field model without jumps, even under less strict hypothesis than Lipschitz continuity of the coefficients. For example, under relaxed regularity conditions, Ref.\cite{mishura2020existence} is a recent work that establishes weak and strong well-posedness results and Ref.\cite{daniel2018strong} considers also the propagation of chaos. Ref.\cite{liu2021long} investigates the uniform-in-time propagation of chaos for the mean-field weakly interacting particle system.

In this paper, we focus on the mean field coupled jump-diffusion particle system and its limit McKean-Vlasov equation. The jump term is modeled as a double stochastic Poisson process with the state-dependent intensity. The coefficients of corresponding McKean-Vlasov equation are locally Lipschitz continuous and satisfy a growth condition. We establish the strong well-posedness of the relevant McKean-Vlasov equations and prove a convergence result for the particle system as it extends spatially.

The first novelty of this paper is that we prove the strong well-posedness of the solution to the McKean-Vlasov equation under our framework. This result of the model with jumps has already been studied under globally Lipschitz assumptions on the coefficients\textsuperscript{\cite{andreis2018mckean}}. For the locally Lipschitz case, it becomes more challenging. Ref.\cite{erny2022well} uses Osgood’s lemma instead of Gr\"{o}nwall’s lemma to deal with the locally Lipschitz case, and Ref.\cite{mehri2020propagation} applies the Euler approximation to construct a solution of the McKean-Vlasov equation. In this paper, we first establish the existence of a local (strong) solution by a classical truncation argument, then we provide a criterion using Lyapunov functions and thereby prove that under the given growth conditions, such solution is a global one. As an example of our locally Lipschitz system, we show that a biological science model called FitzHugh-Nagumo neuron networks (which was first proposed by Ref.\cite{baladron2012mean}, with clarification note by Ref.\cite{bossy2015clarification}) falls into our model.

The second novelty of our paper is that we establish the propagation of chaos property and estimate the order of the convergence rate of our model. Under continuous frameworks, Refs.\cite{liping2018uniqueness, bo2022probabilistic} study the convergence rate. Refs.\cite{erny2022well, mehri2020propagation} prove the propagation of chaos with respect to the jump model under locally Lipschitz assumption, but without a conclusion on the convergence rate. In this paper, to prove the smoothness of the propagator, we study the well-posedness of an integro-differential parabolic equation. With the help of this smoothness property, we ultimately provide an estimate for the order of convergence rate with respect to a Wasserstein-like distance.

The paper is organized as follows: we introduce in Section \ref{sec:Model} the $n$-particle system with mean filed interaction, whose dynamic follows a couple of SDEs with jumps. In Section \ref{sec:MFa} we study the existence of a solution to the corresponding McKean-Vlasov limit equation. Section \ref{sec:Propagation} establishes the propagation of chaos under a proper metric, gives an estimate of the speed of the convergence and proves the uniqueness of the solution to the McKean-Vlasov limit equation.

\section{Model}\label{sec:Model}
\subsection{Jump diffusion model}

Let $(\Omega,\F,\Fx,\Px)$ be a complete filtered probability space with the filtration $\Fx=(\F_t)_{t\in[0,T]}$ satisfying the usual conditions. We consider an interacting system involving $n$ SDEs which is described as follows: for $i=1,\ldots,n$,
\begin{align}\label{eq:Intensityi}
    dX_t^i=f(t,X_t)dt+g(t,X_t)dW_t^i+h(t,X_{t-}^i)dN_t^{i},\quad X_0^i\in\R^m,
\end{align}
where $X_t=(X_t^1,\ldots,X_t^n)$ for $t\in[0,T]$ and $W^i=(W_t^i)_{t\in[0,T]}$ for $i=1,\ldots,n$ are $n$-independent ($d$-dimensional) Brownian motions, and for each $i=1,\ldots,n$, $N^{i}=(N_t^{i})_{t\in[0,T]}$ is a double stochastic Poisson processes with the state-dependent intensity given by the positive functions $\lambda_i(t,X_{t-})$ for $t\in[0,T]$. In other words, it holds that
\begin{align}\label{eq:pure-jumpmartingale}
    M_t^{i}:= N_t^{i} - \int_0^t\lambda_i(s,X_{s-})ds,\quad t\in[0,T]
\end{align}
is a $(\Px,\Fx)$-martingale.

We impose the following assumption on the coefficients $(f,g,h)$ of the system \eqref{eq:Intensityi}, so that this system admits a unique strong solution $X=(X_t)_{t\in[0,T]}$ which takes value on $\R^{mn}$.

\begin{itemize}
\item[\Afgh] The coefficients $f:[0,T]\times\R^{mn}\to\R^m$, $g:[0,T]\times\R^{mn}\to\R^{md}$ and $h:[0,T]\times\R^m\to\R^m$ satisfy the uniformly locally Lipschitz condition. Namely, for any $R>0$, there exists a constant $L_R>0$ such that for all $t\in[0,T]$ and $x=(x_1,\ldots,x_n)$, $y=(y_1,\ldots,y_n)\in B_R(0):=\{x\in\R^{mn};\ |x|\leq R\}$,
\begin{align*}
    |f(t,x)-f(t,y)| + |g(t,x)-g(t,y)| + |h(t,x_i)-h(t,y_i)|\leq L_R|x-y|,\\
    \forall i=1,\ldots,n.
\end{align*}
where $|\cdot|$ denotes the Euclidean norm.
\end{itemize}

Under a truncation argument, the condition {\Afgh} can only guarantee the existence and uniqueness of local (strong) solutions of \eqref{eq:Intensityi}. In order to establish the global solution, we further impose the condition on the existence of a Lyapunov function associated to the system \eqref{eq:Intensityi} and give the following lemma:
\begin{lemma}\label{lem:solsystem}
Let the assumption {\Afgh} holds, also suppose the following:
\begin{itemize}
\item[\Alya] There exists a function $V:\R^{mn}\to\R_+:=(0,+\infty)$ satisfying that
\begin{enumerate}[label=(\roman*)]
\item Let $q_R:=\inf_{|x|>R}V(x)$, then $\lim_{R\to\infty}q_R=+\infty$;
\item $\Ex[V(X_0)]<\infty$;
\item There exists a constant $C>0$, such that for all $t\in[0,T]$,
\begin{align*}
    \Ex[V(X_{t\wedge\tau_R})]\leq\Ex[V(X_0)]+C\int_0^t\left(1+\Ex[V(X_{s\wedge\tau_R})]\right)ds,
\end{align*}
where $\tau_R:=\inf\{t\in[0,T);\ |X_t|\geq R\}$, $\inf\emptyset:=T$.
\end{enumerate}
\end{itemize}
Then there exists a unique strong solution $X=(X_t)_{t\in[0,T]}$ of the system \eqref{eq:Intensityi} with an initial condition satisfying $\Ex[|X_0|^2]<\infty$.
\end{lemma}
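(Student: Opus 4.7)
The plan is to combine a standard truncation/localization construction with the Lyapunov non-explosion criterion supplied by {\Alya}. Concretely, for each $R>0$, let $\varphi_R:\R^{mn}\to[0,1]$ be a smooth cutoff with $\varphi_R\equiv 1$ on $B_R(0)$ and $\varphi_R\equiv 0$ outside $B_{R+1}(0)$, and define truncated coefficients
\[
f^R(t,x):=\varphi_R(x)f(t,x),\quad g^R(t,x):=\varphi_R(x)g(t,x),\quad h^R(t,x_i):=\varphi_R(x_i)h(t,x_i),
\]
together with $\lambda_i^R(t,x):=\lambda_i(t,x)\wedge R$. Under {\Afgh} these truncated coefficients are bounded and globally Lipschitz, so classical results for SDEs driven by Brownian motion and a state-dependent Poisson term (e.g.\ standard well-posedness for jump-diffusions with globally Lipschitz coefficients) produce a unique strong solution $X^R=(X^{R,1},\ldots,X^{R,n})$ on $[0,T]$.

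Second, I would verify the compatibility $X^{R'}_{t\wedge\tau_R}=X^R_{t\wedge\tau_R}$ for $R'\geq R$, where $\tau_R:=\inf\{t\in[0,T);\,|X^R_t|\geq R\}$; this is a routine pathwise-uniqueness argument for the truncated system, using that $f^{R'},g^{R'},h^{R'}$ coincide with $f,g,h$ on $B_R(0)$ and that $\lambda_i^{R'}=\lambda_i$ on the same set (up to choosing $R$ large enough so $\lambda_i\leq R$ there, which we may do by local boundedness coming from continuity). This consistency lets me define a solution $X_t$ on the random interval $[0,\tau_\infty)$ with $\tau_\infty:=\lim_{R\to\infty}\tau_R$.

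Third, and this is the only substantive step, I have to upgrade the local solution to a global one by showing $\tau_\infty=T$ a.s. Applying {\Alya}(iii) and Gr\"onwall's inequality gives
\[
\Ex[V(X_{t\wedge\tau_R})]\;\leq\;\bigl(\Ex[V(X_0)]+CT\bigr)e^{CT}=:K,\qquad t\in[0,T],
\]
uniformly in $R$. On $\{\tau_R<T\}$ we have $|X_{\tau_R}|\geq R$, so by {\Alya}(i),
\[
q_R\,\Px(\tau_R<T)\;\leq\;\Ex\bigl[V(X_{\tau_R})\mathds{1}_{\{\tau_R<T\}}\bigr]\;\leq\;\Ex[V(X_{T\wedge\tau_R})]\;\leq\;K,
\]
and since $q_R\to\infty$ as $R\to\infty$, $\Px(\tau_R<T)\to 0$; monotonicity of $\tau_R$ in $R$ then forces $\tau_\infty=T$ a.s. Thus the process $X_t$ is well defined on the whole interval $[0,T]$, solves \eqref{eq:Intensityi}, and pathwise uniqueness on each $[0,\tau_R]$ transfers to global pathwise uniqueness.

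The main obstacle I expect is not any single estimate but the bookkeeping around the jump term: one must check carefully that the compensated Poisson process $M^{i}$ built from the truncated intensity $\lambda_i^R$ still yields a martingale on $[0,\tau_R]$ that agrees with the original $M^{i}$, so that $X^R$ actually solves the original SDE up to $\tau_R$. Once that is in place, the Lyapunov step above is essentially Markov's inequality, and the locally Lipschitz assumption {\Afgh} takes care of uniqueness through a standard Gr\"onwall argument applied to $|X_{t\wedge\tau_R}-\tilde X_{t\wedge\tau_R}|^2$ for two candidate solutions.
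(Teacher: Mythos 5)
Your proposal is correct and follows essentially the same route as the paper: the paper obtains the local strong solution by citing Theorem 9.1 of Chapter IV of Ikeda--Watanabe (which is itself a truncation argument you reconstruct by hand), and then proves non-explosion exactly as you do, via the Gr\"onwall bound from {\Alya}(iii) and the Markov-type estimate $q_R\,\Px(\tau_R<t)\leq \Ex[\1_{\{\tau_R<t\}}V(X_{t\wedge\tau_R})]$ with $q_R\to\infty$. The only caveat is your appeal to continuity of $\lambda_i$ to truncate the intensity, which is not assumed in {\Afgh}; this point is glossed in the paper as well and does not affect the main argument.
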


\begin{proof}
By applying Theorem 9.1 of chapter IV of Ref.\cite{ikeda2014stochastic}, the condition {\Afgh} guarantees the existence and uniqueness of local (strong) solutions to \eqref{eq:Intensityi}. In other words, we have the solution of \eqref{eq:Intensityi} on the time interval $[0,t\wedge\tau_R]$ for all $t\in[0,T]$ and $R>0$. Thus, it is enough to prove that $\tau_R\to T$ as $R\to\infty$, $\Px$-a.s. Note that the condition (iii) of {\Alya} yields that for all $t\in[0,T]$ and $R>0$, $\Ex[V(X_{t\wedge\tau_R})]\leq e^{Ct}\{1+\Ex[V(X_0)]\}$. Therefore, we have for all $t\in[0,T]$,
\begin{align}
&\Px(\tau_R<t) \leq \frac{1}{q_R}\Ex\left[\1_{\tau_R<t}V(X_{\tau_R})\right]\nonumber\\
=&\frac{1}{q_R}\Ex\left[\1_{\tau_R<t}V(X_{t\wedge\tau_R})\right]\leq\frac{1}{q_R} {\rm e}^{Ct}\left(1+\Ex[V(X_0)]\right),
\end{align}
and hence $\Px(\tau_R<t)\to0$ as $R\to\infty$ by the condition (i) of {\Alya}, which gives the desired result.
\end{proof}

We next give a condition on the growth of the coefficients of the system \eqref{eq:Intensityi}:
\begin{itemize}
\item[\Agrow] The coefficients $f:[0,T]\times\R^{mn}\to\R^m$, $g:[0,T]\times\R^{mn}\to\R^{md}$, $h:[0,T]\times\R^m\to\R^m$ and $\lambda_i:[0,T]\times\R^{mn}\to\R_+$ satisfy the following growth condition: there exists a constant $K>0$ such that for all $t\in[0,T]$ and $x=(x_1,\ldots,x_n)\in\R^{mn}$,
\begin{align*}
    2\sum_{i=1}^n x_i^{\top}f(t,x)+{\rm tr}\left[gg^{\top}(t,x)\right]+\sum_{i=1}^n\lambda_i(t,x)\left(|h(t,x_i)|^2+2x_i^{\top}h(t,x_i)\right)\\
    \leq K(1+|x|^2).
\end{align*}
\end{itemize}

It is not difficult to verify that if $\Ex[|X_0|^2]<\infty$, then $V(x)=|x|^2$ for $x\in\R^{mn}$ satisfies the conditions (i)-(iii) of {\Alya} under the assumption {\Agrow}. Then, Lemma \ref{lem:solsystem} yields that:
\begin{lemma}\label{lem:solsystem2}
Let assumptions {\Afgh} and {\Agrow} hold. Then there exists a unique strong solution $X=(X_t)_{t\in[0,T]}$ of the system \eqref{eq:Intensityi} with an initial condition satisfying $\Ex[|X_0|^2]<\infty$.
\end{lemma}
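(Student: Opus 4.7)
My plan is to show that, under \Afgh\ and \Agrow, the function $V(x)=|x|^2$ satisfies the three conditions of \Alya, so that Lemma \ref{lem:solsystem} delivers the conclusion directly. Conditions (i) and (ii) are immediate: $q_R=\inf_{|x|>R}|x|^2=R^2\to+\infty$ as $R\to\infty$, and $\Ex[V(X_0)]=\Ex[|X_0|^2]<\infty$ by hypothesis on the initial datum. The whole weight of the argument therefore falls on the differential inequality (iii).

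For (iii), I would apply the It\^o formula for semimartingales with jumps to $|X_{t\wedge\tau_R}|^2=\sum_{i=1}^n|X_{t\wedge\tau_R}^i|^2$. The continuous quadratic variation contributes the $\mathrm{tr}(gg^\top)$ term, and a jump of $N^i$ produces the exact increment $|X_{s-}^i+h(s,X_{s-}^i)|^2-|X_{s-}^i|^2=2X_{s-}^{i,\top}h(s,X_{s-}^i)+|h(s,X_{s-}^i)|^2$. Splitting each Poisson integral through the compensator $dN_s^i=dM_s^i+\lambda_i(s,X_s)\,ds$ and summing over $i$ gives
\begin{align*}
|X_{t\wedge\tau_R}|^2=|X_0|^2 &+\int_0^{t\wedge\tau_R}\!\Big[2\sum_{i=1}^n X_s^{i,\top}f(s,X_s)+\mathrm{tr}(gg^\top)(s,X_s)\\
&\qquad{}+\sum_{i=1}^n\lambda_i(s,X_s)\bigl(|h(s,X_s^i)|^2+2X_s^{i,\top}h(s,X_s^i)\bigr)\Big]\,ds+\mathcal{M}_t,
\end{align*}
where $\mathcal{M}_t$ collects the Brownian and compensated-Poisson stochastic integrals. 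Because $|X_{s-}|\leq R$ for all $s\leq\tau_R$ and the coefficients are continuous (hence locally bounded) by \Afgh, the integrands inside $\mathcal{M}$ are uniformly bounded on $[0,t\wedge\tau_R]$, so $\mathcal{M}$ stopped at $\tau_R$ is a true martingale. Taking expectations and bounding the bracketed drift by \Agrow\ yields
\begin{align*}
\Ex[|X_{t\wedge\tau_R}|^2]&\leq\Ex[|X_0|^2]+K\,\Ex\!\int_0^{t\wedge\tau_R}\!\bigl(1+|X_s|^2\bigr)ds\\
&\leq\Ex[|X_0|^2]+K\int_0^t\bigl(1+\Ex[|X_{s\wedge\tau_R}|^2]\bigr)ds,
\end{align*}
where the last step uses $\1_{\{s\leq\tau_R\}}|X_s|^2\leq|X_{s\wedge\tau_R}|^2$. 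This is exactly condition (iii) of \Alya\ with $C=K$, and an application of Lemma \ref{lem:solsystem} completes the proof.

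The only place I expect any real care is justifying the true-martingale (rather than merely local-martingale) status of $\mathcal{M}$ after stopping. This is not automatic, since a jump at $\tau_R$ can push $X_{\tau_R}$ outside $B_R(0)$; however, the left-limit $X_{s-}$ still satisfies $|X_{s-}|\leq R$ for every $s\leq\tau_R$, which is enough to bound the integrands on the stopped interval. Beyond this point the It\^o computation, the use of \Agrow, and the reduction to Lemma \ref{lem:solsystem} are all routine.
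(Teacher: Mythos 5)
Your proposal is correct and follows exactly the paper's route: the paper's own proof consists of verifying (as you do, via the It\^o formula with jumps and the compensator splitting) that $V(x)=|x|^2$ satisfies conditions (i)--(iii) of {\Alya} under {\Agrow}, and then invoking Lemma \ref{lem:solsystem}. Your extra care about the stopped martingale term (bounded integrands on $[0,t\wedge\tau_R]$ via the left limits, with $M^i$ a true martingale by assumption) fills in a detail the paper leaves implicit, but the argument is the same.
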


\subsection{Mean field model}

The particle system considered in this paper is given by the following mean field model:
\begin{align}\label{eq:Intensityimfm}
    dX_t^i&=\overline{f}\left(t,X_t^i,\frac{1}{n}\sum_{j=1}^{n}\alpha(X_{t}^{i},X_{t}^{j})\right)dt+\overline{g}\left(t,X_t^i,\frac{1}{n}\sum_{j=1}^{n}\beta(X_{t}^{i},X_{t}^{j})\right)dW_t^i\nonumber\\
    &\quad+h(t,X_{t-}^i)dN_t^i,
\end{align}
for $i=1,\ldots,n$, where $\alpha:\R^m\times\R^m\to\R$, $\beta:\R^m\times\R^m\to\R$, $\overline{f}:[0,T]\times\R^m\times \R\to\R^m$, $\overline{g}:[0,T]\times\R^m\times\R\to\R^{md}$ and $h:[0,T]\times\R^m\to\R^m$ are all measurable functions.

Let us define the following functions: for $x=(x^1,\ldots,x^n)\in\R^{mn}$,
\begin{align}\label{eq:fg}
    f_i(t,x)&:= \overline{f}\left(t,x^i,\frac{1}{n}\sum_{j=1}^{n}\alpha(x^{i},x^{j})\right),\quad g_i(t,x):= \overline{g}\left(t,x^i,\frac{1}{n}\sum_{j=1}^{n}\beta(x^{i},x^{j})\right).
\end{align}
If $(f_i,g_i,h)$ above satisfies assumptions {\Afgh} and {\Agrow}, then by Lemma \ref{lem:solsystem2} the system \eqref{eq:Intensityimfm} admits a unique strong solution $X=(X^1,\ldots,X^n)$ provided that $\Ex[|X_0|^2]<\infty$.

\begin{example}
Ref.\cite{baladron2012mean} address the model of the network of FitzHugh-Nagumo neurons with simple maximum conductance variation, and such model is also investigated by Refs.\cite{Eric2014Meanfield, ermentrout2010mathematical, hocquet2021optimal}. However, the aforementioned studies predominantly concentrate on research conducted within the framework of diffusion state equation models. We next provide a verification that the network model with one population in Ref.\cite{baladron2012mean} falls into our jump-diffusion system \eqref{eq:Intensityimfm}. To this purpose, let $m=3$ and $d=3$. For $i=1,\ldots,n$ and $x^i=(x^i_1,x_2^i,x_3^i)\in\R\times\R\times[0,1]$, define the coefficients as $\alpha(x^i,x^j):=-J(x_1^{i}-V_{rev})x_3^{j}$ and $\beta(x^i,x^j):=-\sigma(x_1^{i}-V_{rev})x_3^{j}$, where $J,V_{rev},\sigma\in\R_+$. For any $(t,x^i,z)\in[0,T]\times\R^3\times\R$, we introduce the following functions:
\begin{align}
\overline{f}(t,x^{i},z)&=\begin{bmatrix}
x_1^{i}-\frac{(x_1^{i})^{3}}{3}-x_2^{i}+I_t+z \\
c(x_1^{i}+a-bx_2^{i})\\
\frac{a_{r}T}{1+e^{-\theta(x_1^{i}-V)}}(1-x_3^{i})-a_{d}x_3^{i}
\end{bmatrix},\\
\overline{g}(t,x^{i},z)&=\begin{bmatrix}
\sigma_{ext} & 0 & z\\
0      & 0 & 0\\
0      & 0 & \chi(x_3^{i})\sqrt{\frac{a_{r}T}{1+e^{-\theta(x_1^{i}-V)}}(1-x_3^{i})+a_{d}x_3^{i}}
\end{bmatrix},
\end{align}
where $\chi(x_3^{i})=0.1{\rm e}^{-\frac{1}{2(1-(2x_3^{i}-1)^{2})}}$ and $I:[0,T]\to\R$ is a deterministic bounded function. In addition, $V$, $a,b,c$, $a_r,a_d$ and $\sigma_{ext}$ are all positive constants.

Recall the functions $f,g$ defined by \eqref{eq:fg} with $\overline{f},\overline{g}$ and $\alpha,\beta$ given as above. Note that $\chi:\R\to\R_+$ is Lipschitz continuous. Then, it is not difficult to verify that $f,g$ satisfy assumptions {\Afgh} and {\Agrow} with $h\equiv0$. Let us further impose the following condition:
\begin{itemize}
\item[{\Ahlam}] The intensity function $\lambda_i:[0,T]\times\R^{mn}\to\R^m$ of the $i$-th state and the jump function $h:[0,T]\times\R^m\to\R^m$ satisfy that:
\begin{enumerate}[label=(\roman*)]
    \item For any $R>0$, there exists a constant $L_R>0$ which is independent of $t$, such that for all $t\in[0,T]$ and $x=(x^1,\ldots,x^n),y=(y^1,\ldots,y^n)\in B_R(0):=\{x\in\R^{mn};\ |x|\leq R\}$ (where $x^i,y^i\in\R^m$),
    \begin{align*}
        |h(t,x^i)-h(t,y^i)|\leq L_R|x^i-y^i|, \quad \forall i=1,\ldots,n;
    \end{align*}
    \item For $i=1,\ldots,n$, $\lambda_i(t,x)=p_i\psi(t,\overline{x})$ where $\psi:[0,T]\times\R^m\to\R_+$ is measurable and $\overline{x}:=\frac{1}{n}\sum_{i=1}^n x^i$.
    \item There exists a constant $K>0$ such that for all $t\in[0,T]$ and $x=(x^1,\ldots,x^n)\in\R^{mn}$ with $x^i\in\R^m$,
\end{enumerate}
\begin{align*}
    \sum_{i=1}^n p_i \psi(t,\overline{x})(|h(t,x^i)|^2+2x^{i \top}h(t,x^i))\leq K(1+|x|^2).
\end{align*}
\end{itemize}
Therefore, we have established a jump-diffusion model of the network of FitzHugh-Nagumo neurons with simple maximum conductance variation as an extension to the one in Ref.\cite{baladron2012mean}, where the jump process models an intense response of the neuron when it faces an external environmental impact, allowing for a more realistic representation of spiking activity and the integration of multiple inputs within a neuron.
\end{example}
\section{Mean field analysis}\label{sec:MFa}

In the following sections, we only consider the case that the intensity function of the $i$-th state is given by the form $\lambda_i(t,x)=p_i\psi(t,\overline{x})$ where $p_i\in\R_+$ and $\psi:[0,T]\times\R^m\to\R_+$ is a bounded measurable function with upper bound $C_{\psi}$. We introduce the empirical measure-value process as
\begin{align}\label{eq:valueproc}
    \mu_t^n(dp,dx):=\frac{1}{n}\sum_{i=1}^n\delta_{(p_i,X_t^i)}(dp,dx),
\end{align}
where $\delta$ denotes the Dirac delta measure, $(p,x)\in E:=\R_+\times\R^m$ and $\mu_0^n(dp,dx_0)$ is the empirical measure related to the initial state. We propose the following assumption:
\begin{itemize}
\item[{\Ai}] There exists $p\in\R$, such that $p_i\to p$ as $i\to\infty$. The sequence of r.v.s $(X_0^i)_{i\ge 1}$ is i.i.d. and satisfies that $\Ex[|X_0^i|^2]<\infty$ for all $i\geq1$.
\end{itemize}

Let ${\cal P}(E)$ be the set of all Borel probability measures on $E$. Under assumption {\Ai}, we know that there exists a probability measure $\rho_0\in {\cal P}(E)$ such that $\mu_0^n\Rightarrow\rho_0$ in the weak sense.
\\

Assume $\phi\in C^2(E)$ is a real-valued twice continuously differentiable function, let $\partial_{x}\phi:=(\partial_{x_{i}}\phi)_{i=1,\ldots,n}^{\top}$ be the gradient of $\phi$ and $\partial_{x}^2\phi$ be the corresponding Hessian matrix. By \eqref{eq:valueproc}, it holds for $s\in[0,T]$ that
\begin{align}
\left<\mu_s^n,\alpha(x,\cdot)\right>=\frac{1}{n}\sum_{j=1}^{n}\alpha(x,X_{s}^{j}),\quad
\left<\mu_s^n,\beta(x,\cdot)\right>=\frac{1}{n}\sum_{j=1}^{n}\beta(x,X_{s}^{j}).
\end{align}
Then, for $i=1,\ldots,n$ and $t\in [0,T]$, It\^o's formula yields that
\begin{align}
    \phi(p_i,X_t^i)=&\phi(p_i,X_0^i)+ \int_0^t \partial_{x}\phi(p_i,X_s^i)^{\top}\overline{f}(s,X_s^i,\left<\mu_s^n,\alpha(X_s^i,\cdot)\right>)ds\nonumber\\
    &+\frac{1}{2}\int_0^t{\rm tr}\left[\partial_{x}^2\phi(p_i,X_s^i)\overline{g}\overline{g}^{\top}\left(s,X_s^i,\left<\mu_s^n,\beta(X_s^i,\cdot)\right>\right)\right]ds\nonumber\\
    &+\int_0^t p_i\left(\phi\left(p_i,X_{s}^i+h(s,X_{s}^i)\right)-\phi\left(p_i,X_{s}^i\right)\right)\psi\left(s,\overline{X}_s\right)ds\nonumber\\
    &+\int_0^t \partial_{x}\phi\left(p_i,X_s^i\right)^{\top}\overline{g}\left(s,X_s^i,\left<\mu_s^n,\beta(X_s^i,\cdot)\right>\right)dW_s^i\nonumber\\
    &+\int_0^t\left(\phi\left(p_i,X_{s-}^i+h(s,X_{s-}^i)\right)-\phi\left(p_i,X_{s-}^i\right)\right)dM_s^i.
\end{align}
Define the following operators as: for $(t,p,x)\in[0,T]\times E$ and $\nu\in{\cal P}(E)$,
\begin{align}\label{eq:operators}
\begin{cases}
    \displaystyle {\cal L}_c^{t,\nu}\phi(p,x):=\partial_{x}\phi(p,x)^{\top}\overline{f}(t,x,\left<\nu,\alpha(x,\cdot)\right>)+\frac{1}{2}{\rm tr}\left[\partial_{x}^2\phi(p,x)\overline{g}\overline{g}^{\top}(t,x,\left<\nu,\beta(x,\cdot)\right>)\right],\\ \\
    \displaystyle {\cal L}_j^{t,\nu}\phi(p,x):=p\left(\phi(p,x+h(t,x))-\phi(p,x)\right)\psi(t,\left<\nu,I\right>),
\end{cases}
\end{align}
where $I(p,x)=x$. Therefore, it holds that
\begin{align}\label{fpkn}
    \left<\mu_t^n,\phi\right>=&\left<\mu_0^n,\phi\right>+\int_0^t\left<\mu_s^n,{\cal L}_c^{s,\mu_s^n}\phi+{\cal L}_j^{s,\mu_s^n}\phi\right>ds\nonumber\\
    &+\frac{1}{n}\sum_{i=1}^n\int_0^t \partial_{x}\phi(p_i,X_s^i)\overline{g}(s,X_s^i,\left<\mu_s^n,\beta(X_s^i,\cdot)\right>)dW_s^i\nonumber\\
    &+\frac{1}{n}\sum_{i=1}^n\int_0^t\left(\phi(p_i,X_{s-}^i+h(s,X_{s-}^i))-\phi(p_i,X_{s-}^i)\right)dM_s^i.
\end{align}
Taking formally $n\to\infty$ on both side of \eqref{fpkn} shows that any limit point of $\mu^n$ should satisfy the following FPK equation:
\begin{align}\label{eq:limit-mu}
    \left<\mu_t,\phi\right>&=\left<\mu_0,\phi\right>+\int_0^t\left<\mu_s,{\cal L}_c^{s,\mu_s}\phi+{\cal L}_j^{s,\mu_s}\phi\right>ds.
\end{align}
We next establish a solution of the above FPK equation. Prior to do this, we introduce the following state equation given by
\begin{align}\label{eq:limit-sol-FPK}
    \begin{cases}
        \displaystyle \mu_t(dp,dx) := \int_{E}\Ex\left[\delta_{X_t^{p,x_0,\mu}}(dx)\right]\rho_0(dp,dx_0),\quad {\rm on}~{\cal B}(E),\\ \\
        \displaystyle X_t^{p,x_0,\mu} = x_0+\int_0^t \overline{f}(s,X_s^{p,x_0,\mu},\left<\mu_s,\alpha(X_s^{p,x_0,\mu},\cdot)\right>)ds \\
        \displaystyle \qquad\qquad\quad+ \int_0^t \overline{g}(s,X_s^{p,x_0,\mu},\left<\mu_s,\beta(X_s^{p,x_0,\mu},\cdot)\right>)dW_s+\int_0^t h(s,X_{s-}^{p,x_0,\mu})dN_s,
    \end{cases}
\end{align}
where $N=(N_t)_{t\in[0,T]}$ is a double stochastic Poisson process with intensity process $p\psi(t, \left<\mu_t,I\right>)$, namely, the stochastic process $M_t:=N_t-\int_0^tp\psi(t,\left<\mu_s,I\right>)ds$ for $t\in[0,T]$ is a martingale.

For $(s,y)\in[0,T]\times\R^m$, define 
\begin{align}
    \left\{
    \begin{array}{lcr}
        \tilde{\alpha}(s,y):=\int_{E}\Ex[\alpha(y,X_s^{p,x_0,\mu})]\rho_0(dp,dx_0)\left(=\left<\mu_s,\alpha(y,\cdot)\right>\right),\\ \\
        \tilde{\beta}(s,y):=\int_{E}\Ex[\beta(y,X_s^{p,x_0,\mu})]\rho_0(dp,dx_0)\left(=\left<\mu_s,\beta(y,\cdot)\right>\right),
    \end{array}
    \right.
\end{align}
then the equation \eqref{eq:limit-sol-FPK} can be rewritten as
\begin{align}\label{eq:Z-FPK}
    \begin{cases}
        \displaystyle X_0^{p,x_0,\mu}=x_0,\\ \\
        \displaystyle dX_t^{p,x_0,\mu}= \overline{f}(t,X_t^{p,x_0,\mu},\tilde{\alpha}(t,X_t^{p,x_0,\mu}))dt + \overline{g}(t,X_t^{p,x_0},\tilde{\beta}(t,X_t^{p,x_0,\mu}))dW_t\\
        \displaystyle\qquad\qquad\quad+ h(t,X_{t-}^{p,x_0,\mu})dN_t.
    \end{cases}
\end{align}
where the corresponding intensity of $N_t$ is $p\psi(t, \Ex[X_t^{p,x_0,\mu}])$.

To establish the well-posedness of equation \eqref{eq:Z-FPK}, we give the corresponding assumptions:
\begin{itemize}
\item[\Afg] The coefficients $\overline{f}:[0,T]\times\R^{m}\times\R\to\R^m$, $\overline{g}:[0,T]\times\R^{m}\times\R\to\R^{md}$ and $h:[0,T]\times\R^m\to\R^m$ satisfy the uniformly locally Lipschitz condition. Namely, for any $R>0$, there exists a constant $\overline{L}_R>0$ such that for all $t\in[0,T]$, $x_1,x_2\in B_R(0):=\{x\in\R^{m};\ |x|\leq R\}$ and $y_1,y_2\in \R$,
\begin{align*}
    |\overline{f}(t,x_1,y_1)-\overline{f}(t,x_2,y_2)| + |\overline{g}(t,x_1,y_1)-\overline{g}(t,x_2,y_2)| \\
    + |h(t,x_1)-h(t,x_2)|\leq \overline{L}_R(|x_1-x_2|+|y_1-y_2|).
\end{align*}
\end{itemize}
\begin{itemize}
\item[\Aab] The coefficients $\alpha:\R^m\times\R^m\to\R$ and $\beta:\R^m\times\R^m\to\R$ satisfy the uniformly global Lipschitz condition and a growth condition. Namely, there exists a constant $J>0$ such that for all $x_1,x_2,y_1,y_2\in\R^m$, 
\begin{gather*}
    |\alpha(x_1,y_1)-\alpha(x_2,y_2)|+|\beta(x_1,y_1)-\beta(x_2,y_2)|\leq J(|x_1-x_2|+|y_1-y_2|),\\
    |\alpha(x_1,y_1)|+|\beta(x_1,y_1)|\leq J(1+|x_1|+|y_1|).
\end{gather*}
\end{itemize}
\begin{itemize}
\item[\Agrows] The coefficients $\overline{f}:[0,T]\times\R^m\times\R\to\R^m$, $\overline{g}:[0,T]\times\R^m\times\R\to\R^{md}$ and $h:[0,T]\times\R^m\to\R^m$ satisfy the following growth condition: there exists a constant $\overline{K}>0$ such that for all $t\in[0,T]$ and $x,y\in\R^m$,
\begin{gather*}
     2x^{\top}\overline{f}(t,x,y)+{\rm tr}\left[\overline{g}\overline{g}^{\top}(t,x,y)\right]\leq \overline{K}(1+|x|^2+|y|^2),\\
     |h(t,x)|^2+2x^{\top}h(t,x)\leq\overline{K}(1+|x|^2).
\end{gather*}
\end{itemize}

\begin{lemma}\label{lem:solsystem3}
Let assumptions {\Afg}, {\Aab} and {\Agrows} hold. Then there exists a unique strong solution $X^{p,x_0}=(X^{p,x_0}_t)_{t\in[0,T]}$ of the equation \eqref{eq:Z-FPK} with an initial condition $(p,x_0)\in E$.
\end{lemma}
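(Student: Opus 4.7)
The plan is to recognize \eqref{eq:Z-FPK} as a McKean--Vlasov equation whose drift, diffusion and jump intensity all depend (through $\tilde\alpha$, $\tilde\beta$ and $\psi(t,\lc\mu_t,I\rc)$) on the law of the unknown process, and to construct the solution by combining a Picard-type fixed-point scheme on the space of measure-valued paths with the truncation argument already used in Lemma \ref{lem:solsystem2}.

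First, I would solve the ``frozen'' problem: for any deterministic measure flow $(\mu_s)_{s\in[0,T]}\subset{\cal P}(E)$ with $\sup_s\int_E(1+|x|^2)\mu_s(dp,dx)<\infty$, assumption {\Aab} shows that $y\mapsto\tilde\alpha(s,y)$ and $y\mapsto\tilde\beta(s,y)$ are globally Lipschitz in $y$ uniformly in $s$ and have at most linear growth. Composing with {\Afg} makes $(s,y)\mapsto\overline f(s,y,\tilde\alpha(s,y))$ and $(s,y)\mapsto\overline g(s,y,\tilde\beta(s,y))$ locally Lipschitz in $y$, so the $n=1$ version of {\Afgh} holds; furthermore, combining {\Agrows} with the linear growth of $\tilde\alpha,\tilde\beta$ and the boundedness of $\psi$ by $C_\psi$ produces the growth condition {\Agrow} for the frozen single-particle system. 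Lemma \ref{lem:solsystem2} then delivers a unique strong solution $X^{p,x_0,\mu}$ to the frozen SDE, and a standard Gronwall argument provides a second-moment bound $\sup_{t\in[0,T]}\Ex[|X^{p,x_0,\mu}_t|^2]$ that depends affinely on $\sup_s\int_E|x|^2\mu_s(dp,dx)$.

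Next, I would run a Picard iteration $\mu^{(k)}\mapsto\mu^{(k+1)}$ with $\mu^{(k+1)}_t:=\int_E\Ex[\delta_{(p,X^{p,x_0,\mu^{(k)}}_t)}]\rho_0(dp,dx_0)$, started from $\mu^{(0)}_t\equiv\rho_0$. The growth condition {\Agrows} together with the boundedness of $\psi$ yields a uniform-in-$k$ bound $\sup_k\sup_t\int_E|x|^2\mu^{(k)}_t(dp,dx)<\infty$. For two flows $\mu,\mu'$ I would then localize by $\tau_R:=\inf\{t:|X^{p,x_0,\mu}_t|\vee|X^{p,x_0,\mu'}_t|\geq R\}$ and apply It\^o, Burkholder--Davis--Gundy, the local Lipschitz bound {\Afg} on $B_R(0)$, and the global Lipschitz bound from {\Aab} to derive an inequality of the form
\begin{align*}
\Ex\Bigl[\sup_{s\leq t\wedge\tau_R}|X^{p,x_0,\mu}_s-X^{p,x_0,\mu'}_s|^2\Bigr]\leq C_R\int_0^t W_2(\mu_s,\mu'_s)^2\,ds,
\end{align*}
for a Wasserstein-type metric $W_2$ on ${\cal P}(E)$. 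Gronwall converts this into a contraction on a short initial sub-interval, and patching over $[0,T]$ produces a fixed point $\mu$; uniqueness follows by applying the same localized comparison to any two candidate solutions.

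The main obstacle will be the mismatch between the fixed-point step, which is most naturally phrased under global Lipschitz assumptions, and the merely local Lipschitz regularity of $\overline f$, $\overline g$, and $h$ permitted by {\Afg}: a direct Picard contraction fails because $C_R$ blows up as $R\to\infty$. The remedy is precisely the uniform-in-$k$ second-moment bound from the previous step, which makes $\Px(\tau_R<T)$ arbitrarily small uniformly along the iteration when $R$ is large, so that the truncation error can be absorbed into the contraction estimate. This separation of a priori moment estimates (via {\Agrows}) from the local Lipschitz contraction is the structural point that upgrades the frozen-SDE result of Lemma \ref{lem:solsystem2} into a McKean--Vlasov well-posedness theorem.
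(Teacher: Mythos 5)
Your route (freeze the measure flow, solve the frozen SDE, then run a Picard iteration on measure flows with a $W_2$-contraction) is genuinely different from the paper's proof, which never compares two measure flows: the paper gets a local solution directly from the locally Lipschitz structure supplied by {\Afg} and {\Aab} (truncation, as in Lemma \ref{lem:solsystem}) and then globalizes with the Lyapunov function $V(x)=|x|^2$ via It\^o's formula, {\Agrows}, {\Aab} and only the bound $\psi\le C_\psi$. Your frozen-SDE step is sound, but the contraction step has a concrete obstruction under the stated hypotheses: in \eqref{eq:Z-FPK} the jump intensity is $p\,\psi(t,\lc\mu_t,I\rc)$, so when you compare $X^{p,x_0,\mu}$ and $X^{p,x_0,\mu'}$ the two driving point processes have different compensators, and any It\^o/BDG estimate of the squared difference produces a term governed by $\int_0^t\big|\psi(s,\lc\mu_s,I\rc)-\psi(s,\lc\mu'_s,I\rc)\big|\,ds$ (times moments of $h$). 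Since $\psi$ is assumed only bounded and measurable --- no continuity, let alone Lipschitz continuity, in its second argument --- this term cannot be dominated by $C_R\int_0^t W_2(\mu_s,\mu'_s)^2\,ds$, so the key inequality of your scheme, and hence the contraction, is not derivable from {\Afg}, {\Aab}, {\Agrows}.

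Independently of that, the remedy you offer for the local-Lipschitz/contraction mismatch does not close the argument. A uniform-in-$k$ second-moment bound makes $\Px(\tau_R<T)$ small, but smallness of this probability does not control $\Ex\big[|X^{p,x_0,\mu}_t-X^{p,x_0,\mu'}_t|^2\1_{\{\tau_R<t\}}\big]$: for that you need uniform integrability of the squared processes, i.e.\ moments of order strictly larger than $2$ (or bounded coefficients), and {\Agrows} only yields second moments --- it controls $2x^{\top}\overline f+{\rm tr}[\overline g\,\overline g^{\top}]$ but not, say, $|\overline g|^2|x|^2$ separately, so higher moments are not guaranteed. Moreover, even if this truncation error were small, it enters the estimate as an additive constant $\epsilon_R$ independent of the distance between $\mu$ and $\mu'$; the map is then only approximately contractive, Banach's fixed point theorem no longer applies, and the uniqueness part of your argument breaks as well, unless you supply an additional limiting procedure in $R$ interlaced with the iteration, which the proposal does not do. To make your approach rigorous you would need to add hypotheses (Lipschitz continuity of $\psi$ in its second argument, and moment bounds of some order $q>2$ or boundedness of $h$), whereas the paper's truncation-plus-Lyapunov proof, in the spirit of Lemmas \ref{lem:solsystem} and \ref{lem:solsystem2} applied along \eqref{eq:limit-sol-FPK}, avoids comparing flows and therefore never needs such regularity.
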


\begin{proof}
By combining the two assumptions {\Afg} and {\Aab}, we find that the coefficients of the drift term and diffusion term of \eqref{eq:Z-FPK} actually satisfy the same locally Lipschitz condition as in assumption {\Afgh}, hence \eqref{eq:Z-FPK} admits a unique local strong solution.

Subsequently, we show that this solution is actually a global one. Let $V(x)=|x|^2$, for all $x\in \R^m$. It is not hard to check $V(x)$ satisfies (i) and (ii) of assumption {\Alya}. For (iii), an application of It\^o's formula leads to
\begin{align}\label{eq:itoV}
    V(X_{t \wedge \tau_R}^{p,x_0,\mu})=&x_0^2+\int_0^{t\wedge \tau_R} 2(X_s^{p,x_0,\mu})^{\top}\overline{f}(s,X_{s}^{p,x_0,\mu},\tilde{\alpha}(s,X_s^{p,x_0,\mu}))ds\nonumber\\
    &+\int_0^{t\wedge \tau_R} 2(X_s^{p,x_0,\mu})^{\top}\overline{g}(s,X_s^{p,x_0,\mu},\tilde{\beta}(s,X_s^{p,x_0,\mu}))dW_s\nonumber\\
    &+\int_0^{t\wedge \tau_R}{\rm tr}\left[\overline{g}\overline{g}^{\top}(s,X_s^{p,x_0,\mu},\tilde{\beta}(s,X_s^{p,x_0,\mu}))\right]ds\nonumber\\
    &+\int_0^{t\wedge \tau_R}\left((X_{s }^{p,x_0,\mu}+h(s,X_{s}^{p,x_0,\mu}))^2-(X_{s }^{p,x_0,\mu})^{2}\right)dN_s.
\end{align}
Taking expectation on both sides of equation \eqref{eq:itoV}, by assumption {\Agrows} and {\Aab} it holds that for some positive constant $C=C_{\overline{f},\overline{g},h,p,\psi,\alpha,\beta}>0$,
\begin{align}
    &\Ex[V(X_{t \wedge \tau_R}^{p,x_0,\mu})]\nonumber\\
    =&x_0^2+\Ex\left[\int_0^{t\wedge \tau_R} 2(X_s^{p,x_0,\mu})^{\top}\overline{f}\left(s,X_s^{p,x_0,\mu},\left.\Ex[\alpha(y,X_s^{p,x_0,\mu})]\right|_{y=X_s^{p,x_0,\mu}}\right)ds\right]\nonumber\\
    &+\Ex\left[\int_0^{t\wedge\tau_R}\overline{g}\overline{g}^{\top}\left(s,X_s^{p,x_0,\mu},\left.\Ex[\beta(y,X_s^{p,x_0,\mu})]\right|_{y=X_s^{p,x_0,\mu}}\right)ds\right]\nonumber\\
    &+\Ex\left[\int_0^{t\wedge\tau_R} p\psi(s,\Ex[X_s^{p,x_0,\mu}])\left(|h(s,X_{s }^{p,x_0,\mu})|^2+2(X_s^{p,x_0,\mu})^{\top}h(s,X_s^{p,x_0,\mu})\right)ds\right]\nonumber\\
    \leq&\Ex[(X_0^{p,x_0,\mu})^{2}]+C\Ex\bigg[\int_0^{t\wedge\tau_R}\Big(1+|X_s^{p,x_0,\mu}|^2+\left|\left.\Ex[\alpha(y,X_s^{p,x_0,\mu})]\right|_{y=X_s^{p,x_0,\mu}}\right|^2\nonumber\\
    &+\left|\left.\Ex[\beta(y,X_s^{p,x_0,\mu})]\right|_{y=X_s^{p,x_0,\mu}}\right|^2\Big)ds\bigg]\nonumber\\
    \leq&\Ex[(X_0^{p,x_0,\mu})^{2}]+C\Ex\bigg[\int_0^{t\wedge\tau_R}\Big(1+|X_s^{p,x_0,\mu}|^2+\left|\Ex[\left|X_s^{p,x_0,\mu}\right|]\right|^2\Big)ds\bigg]\nonumber\\
    \leq& \Ex[(X_0^{p,x_0,\mu})^{2}]+C\int_0^t\left(\Ex[|X_{s\wedge\tau_R}^{p,x_0,\mu}|^2]+1\right)ds,
\end{align}
which implies that $V(x)$ also satisfies (iii). Then some similar estimates as in the proof of Lemma \ref{lem:solsystem} yield that the equation \eqref{eq:Z-FPK} with an initial condition $(p,x_0)\in E$ has a unique (global) strong solution.
\end{proof}

By Lemma \ref{lem:solsystem3}, the system \eqref{eq:Z-FPK} admits a unique strong solution. Equivalently, we have proved the strong existence and uniqueness of $\mu=(\mu_t)_{t\in[0,T]}$ defined in the system \eqref{eq:limit-sol-FPK}. Now we show that this measure is indeed a solution of the corresponding FPK equation.

\begin{theorem}\label{lem:soldetermindMVE}
Let the assumptions {\Ai}, {\Afg}, {\Aab} and {\Agrows} hold. Then $\mu=(\mu_t)_{t\in[0,T]}$ defined in \eqref{eq:limit-sol-FPK} is a solution of the FPK equation \eqref{eq:limit-mu}.
\end{theorem}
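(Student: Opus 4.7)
The strategy is to derive the FPK identity directly from It\^o's formula applied to the state equation \eqref{eq:Z-FPK}, then average over the initial law $\rho_0$. Fix $\phi\in C^2(E)$ with bounded first and second derivatives (one can then extend to the general case by a standard density or localisation argument). By Lemma \ref{lem:solsystem3} the process $X^{p,x_0,\mu}$ is well-defined, and under \Agrows\ and \Aab\ it satisfies $\sup_{t\in[0,T]}\Ex[|X_t^{p,x_0,\mu}|^2]<\infty$; these moment bounds will be what allows us to pass stopping times to the limit.

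The plan is as follows. First, apply It\^o's formula for jump-diffusions to $\phi(p_i,X^{p,x_0,\mu}_t)$ along the dynamics \eqref{eq:Z-FPK}, stopped at $\tau_R:=\inf\{t\in[0,T);|X_t^{p,x_0,\mu}|\geq R\}$ to ensure integrability. This produces the drift contribution
\begin{align*}
\int_0^{t\wedge\tau_R}\!\!\Bigl(\partial_x\phi(p,X_s)^{\top}\overline{f}(s,X_s,\tilde\alpha(s,X_s))+\tfrac12{\rm tr}\bigl[\partial_x^2\phi(p,X_s)\overline{g}\overline{g}^{\top}(s,X_s,\tilde\beta(s,X_s))\bigr]\Bigr)ds,
\end{align*}
the jump-compensator contribution
\begin{align*}
\int_0^{t\wedge\tau_R}\!p\bigl(\phi(p,X_s+h(s,X_s))-\phi(p,X_s)\bigr)\psi(s,\Ex[X_s^{p,x_0,\mu}])\,ds,
\end{align*}
and two martingale pieces driven by $dW_s$ and $dM_s$ (where $M$ is the compensated Poisson martingale). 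The boundedness of $\partial_x\phi$ together with the $L^2$ control on $X^{p,x_0,\mu}$ and the bound $\psi\le C_\psi$ show that these martingale pieces have integrable quadratic variation up to $\tau_R$, so their expectations vanish.

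Next, take expectations and then let $R\to\infty$, using dominated convergence (the dominating bound coming from the moment estimates and the Lipschitz/growth conditions on $\overline{f},\overline{g},h,\alpha,\beta$). This yields, for every $(p,x_0)\in E$,
\begin{align*}
\Ex\bigl[\phi(p,X_t^{p,x_0,\mu})\bigr]=\phi(p,x_0)+\int_0^t\Ex\!\left[({\cal L}_c^{s,\mu_s}\phi+{\cal L}_j^{s,\mu_s}\phi)(p,X_s^{p,x_0,\mu})\right]ds,
\end{align*}
where in the jump term we recognize $\psi(s,\Ex[X_s^{p,x_0,\mu}])=\psi(s,\langle\mu_s,I\rangle)$ and in the continuous term $\tilde\alpha(s,\cdot)=\langle\mu_s,\alpha(\cdot,\cdot)\rangle$, $\tilde\beta(s,\cdot)=\langle\mu_s,\beta(\cdot,\cdot)\rangle$ by definition. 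Finally, integrate both sides against $\rho_0(dp,dx_0)$ and apply Fubini (justified by the boundedness of $\phi,\partial_x\phi,\partial_x^2\phi$ and the uniform $L^2$ moment of $X^{p,x_0,\mu}$) to recognize $\int_E\Ex[\phi(p,X_t^{p,x_0,\mu})]\rho_0(dp,dx_0)=\langle\mu_t,\phi\rangle$ and similarly inside the time integral. This delivers exactly \eqref{eq:limit-mu}.

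The main obstacle is the rigorous justification of the two exchanges of limits and integrals: showing the stochastic integrals are true martingales (handled by the localisation $\tau_R$ and a uniform $L^2$ bound) and applying Fubini to swap $\Ex$, $\int\rho_0$ and $\int_0^t ds$ in the $\psi$ and $\tilde\alpha,\tilde\beta$ terms, where the integrand depends on $\mu_s$ which itself is built from the same family of processes. The linear growth provided by \Aab\ and \Agrows\ together with the uniform bound $\psi\le C_\psi$ is precisely what makes these exchanges legitimate, so the argument reduces to a careful bookkeeping of these a priori estimates.
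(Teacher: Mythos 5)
Your proposal is correct and follows essentially the same route as the paper: apply It\^o's formula to $\phi(p,X_t^{p,x_0,\mu})$ along \eqref{eq:Z-FPK}, take expectations so that the Brownian and compensated-jump martingale terms vanish (turning the $dN$ integral into the ${\cal L}_j$ term), and then integrate against $\rho_0$ to identify $\langle\mu_t,\phi\rangle$. The only difference is that you make explicit the localisation at $\tau_R$, the moment bounds, and the Fubini step, which the paper's shorter proof leaves implicit; this is a welcome refinement rather than a different argument.
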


\begin{proof}
For any $\phi\in C^2(E)$, by equation \eqref{eq:limit-sol-FPK} and It\^o's formula, it holds that
\begin{align}
    \phi(p,X_t^{p,x_0})=&\phi(p,X_0^{p,x_0})+ \int_0^t {\cal L}_c^{s,\mu_s}\phi(p,X_s^{p,x_0})ds\nonumber\\
    &+\frac{1}{2}\int_0^t \partial_{x}\phi(p,X_s^{p,x_0})\overline{g}(s,X_s^{p,x_0},\left<\mu_s,\beta(X_s^{p,x_0},\cdot)\right>)dW_s\nonumber\\
    &+\int_0^t \left(\phi(p,X_{s-}^{p,x_0}+h(s,X_{s-}^{p,x_0}))-\phi(p,X_{s-}^{p,x_0})\right)dN_s.
\end{align}
Taking expectations on both sides of this equation to kill the martingale term and integrating with respect to $\rho_0$ on $E$, we have
\begin{align}
    &\int_E\Ex[\phi(p,X_t^{p,x_0})]\rho_0(dp,dx_0)\nonumber\\
    =&\int_E\Ex[\phi(p,X_0^{p,x_0})]\rho_0(dp,dx_0)+\int_0^t\int_E \Ex[({\cal L}_c^{s,\mu_s}+{\cal L}_j^{s,\mu_s})\phi(p,X_s^{p,x_0})\rho_0(dp,dx_0)]ds\nonumber\\
    =&\int_E\phi(p,x_0)\rho_0(dp,dx_0)+\int_0^t\int_E\Ex[({\cal L}_c^{s,\mu_s}+{\cal L}_j^{s,\mu_s})\phi(p,X_s^{p,x_0})\rho_0(dp,dx_0)]ds.
\end{align}
This shows that $\mu$ satisfies the FPK equation \eqref{eq:limit-mu}, that is 
\begin{align}
\left<\mu_t,\phi\right>&=\left<\mu_0,\phi\right>+\int_0^t\left<\mu_s,{\cal L}_c^{s,\mu_s}\phi+{\cal L}_j^{s,\mu_s}\phi\right>ds.
\end{align}
Thus, we complete the proof.
\end{proof}

\section{Propagation of chaos}\label{sec:Propagation}

From Theorem \ref{lem:soldetermindMVE} we know that there exists at least one solution to the FPK equation \eqref{eq:limit-mu}. We fix once and for all one such solution $\mu_t$. Next we establish the propagation of chaos of the FPK equation \eqref{eq:limit-mu} (which means a convergence result of the empirical measure of the particle system to the solution of the FPK equation \eqref{eq:limit-mu} under a suitable distance) and thereby show that such solution of \eqref{eq:limit-mu} is actually unique.

\subsection{Propagator of FPK equation}

We first give the definition of propagator corresponding to \eqref{eq:limit-mu}.

\begin{definition}\label{def:propagator}
For $(p,x)\in E$ and $0\leq t\leq u\leq T$, the propagator corresponding to the probability solution $\mu=(\mu_t)_{t\in[0,T]}$ of \eqref{eq:limit-mu} is defined as
\begin{align}\label{propagator}
P_{t,u}\phi(p,x):=\Ex\left[\phi(p,X_u^{t,p,x,\mu})\right], 
\end{align}
where $\phi\in C_b^2(E)$ is bounded and twice continuously differentiable, and the process $X^{t,p,x,\mu}=(X_s^{t,p,x,\mu})_{s\in[t,T]}$ is the unique solution of the following SDE:
\begin{align}\label{eq:limit-sol-FPK2}
    X_s^{t,p,x,\mu}=&x+\int_t^s\overline{f}(r,X_r^{t,p,x,\mu},\left<\mu_r,\alpha(X_r^{t,p,x,\mu},\cdot)\right>)dr\nonumber\\
    &+\int_t^s\overline{g}(r,X_r^{t,p,x,\mu},\left<\mu_r,\beta(X_r^{t,p,x,\mu},\cdot)\right>)dW_r+\int_t^s h(r,X_{r-}^{t,p,x,\mu})dN_r,
\end{align}
where $N=(N_t)_{t\in[0,T]}$ is a double stochastic Poisson process with intensity process $p\psi(t,\left<\mu_t,I\right>)$, namely, the stochastic process $M_t:=N_t-\int_0^tp\psi(s,\left<\mu_s,I\right>)ds$ for $t\in[0,T]$ is a martingale.
\end{definition}

The following two lemmas give important properties of the propagator, which will be useful for estimating the convergence rate of $\mu^n$ to $\mu$.

\begin{lemma}\label{lem:propagator}
Let the assumptions {\Ai}, {\Afg}, {\Aab} and {\Agrows} hold, suppose further that $\overline{f}$, $\overline{g}$ is twice continuously differentiable, also suppose the following: 
\begin{itemize}
\item[\Ahlr] The coefficient $h:[0,T]\times\R^m\to\R^m$ is bounded, and there exist constants $L_h$, $L_h'>0$ such that for all $t\in[0,T]$ and $x_1$, $x_2\in\R^m$,
\begin{align*}
    L_h'|x_1-x_2|\leq|h(t,x_1)-h(t,x_2)|\leq L_h|x_1-x_2|.
\end{align*}
\end{itemize}
Then the propagator in Definition \ref{def:propagator} satisfies that for all $\phi\in C_b^2(E)$, $0\leq t\leq u\leq T$ and $(p,x)\in E$,
\begin{align}\label{eq:propagator}
\begin{cases}
   \displaystyle \partial_t P_{t,u}\phi(p,x)+\left({\cal L}_c^{t,\mu_t}+{\cal L}_j^{t,\mu_t}\right)P_{t,u}\phi(p,x)=0,\\[1em]
   \displaystyle P_{u,u}\phi(p,x)=\phi(p,x).
    \end{cases}
\end{align}
where ${\cal L}_c^{t,\mu_t}$ and ${\cal L}_j^{t,\mu_t}$ are defined as in \eqref{eq:operators}. Moreover, $P_{\cdot,u}\phi(p,\cdot)\in C^{1,2}([0,u]\times\R^m)$.
\end{lemma}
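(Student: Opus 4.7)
The terminal condition $P_{u,u}\phi(p,x)=\phi(p,x)$ is immediate from Definition \ref{def:propagator}. My plan is to derive the backward equation and the $C^{1,2}$-regularity together via the classical ``Markov semigroup $+$ It\^o'' scheme: first establish that $v(t,x):=P_{t,u}\phi(p,x)$ is $C^{1,2}$ in $(t,x)$, then read off the PDE from the flow property of $X^{t,p,x,\mu}$.

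For the spatial regularity, I would view \eqref{eq:limit-sol-FPK2} as a jump-diffusion SDE with time-dependent coefficients $b(s,y):=\overline{f}(s,y,\tilde{\alpha}(s,y))$, $\sigma(s,y):=\overline{g}(s,y,\tilde{\beta}(s,y))$, jump vector $h(s,y)$, and predictable intensity $p\psi(s,\left<\mu_s,I\right>)$, treating $\mu$ as fixed. By Lemma \ref{lem:solsystem3} together with the Lipschitz control on $\alpha,\beta$ from {\Aab}, the maps $\tilde{\alpha}(s,\cdot),\tilde{\beta}(s,\cdot)$ are Lipschitz in $y$ uniformly in $s$; combined with the $C^2$-assumption on $\overline{f},\overline{g}$ and the two-sided Lipschitz hypothesis {\Ahlr} on $h$ (which keeps $y\mapsto y+h(s,y)$ a bi-Lipschitz homeomorphism with bounded derivatives), one can run the standard differentiation-of-flow argument (e.g.\ Ikeda-Watanabe, Ch.\ V) to show that $x\mapsto X_s^{t,p,x,\mu}$ is twice continuously differentiable in the $L^2$-sense, with first and second derivatives satisfying linearised SDEs that admit uniform $L^2$-bounds on $[t,u]$. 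Differentiating under the expectation, legitimate thanks to these bounds and $\phi\in C_b^2(E)$, yields $v(t,\cdot)\in C^2(\R^m)$ with derivatives bounded uniformly in $t\in[0,u]$.

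For the PDE and the time regularity, the Markov/flow property applied to \eqref{eq:limit-sol-FPK2} gives $v(t,x)=\Ex[v(s,X_s^{t,p,x,\mu})]$ for every $t\le s\le u$. Applying It\^o's formula to $v(s,X_s^{t,p,x,\mu})$ on $[t,s]$ and taking expectation (the martingale contributions vanish by the $L^2$-bounds above) produces
\begin{align*}
0=\Ex\left[\int_t^s\bigl(\partial_r v+{\cal L}_c^{r,\mu_r}v+{\cal L}_j^{r,\mu_r}v\bigr)(r,X_r^{t,p,x,\mu})\,dr\right].
\end{align*}
Right-continuity of the integrand at $r=t$ — which comes from the $C^{1,2}$-regularity of $v$ together with the continuity of $r\mapsto\mu_r$ inherent in the FPK equation \eqref{eq:limit-mu} — then lets me divide by $s-t$ and send $s\downarrow t$ to obtain the pointwise identity $\partial_t v(t,x)+({\cal L}_c^{t,\mu_t}+{\cal L}_j^{t,\mu_t})v(t,x)=0$. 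Continuity of $\partial_t v$ in $(t,x)$ follows a posteriori from the PDE itself, since the right-hand side is continuous in $(t,x)$.

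The main obstacle will be step one: the composed drift $b(s,y)=\overline{f}(s,y,\tilde{\alpha}(s,y))$ is genuinely $C^2$ in $y$ only when $\tilde{\alpha}(s,\cdot)$ is, whereas {\Aab} provides no more than Lipschitz control on $\alpha$. The cleanest workaround is to re-cast the argument through the parabolic integro-differential Cauchy problem $\partial_t v+({\cal L}_c^{t,\mu_t}+{\cal L}_j^{t,\mu_t})v=0$ with $v(u,\cdot)=\phi(p,\cdot)$, invoking Garroni-Menaldi type Schauder/$L^p$-regularity theory to produce a classical $C^{1,2}$ solution — here the boundedness of $\psi$ and the bi-Lipschitz nature of $h$ turn ${\cal L}_j^{t,\mu_t}$ into a controlled non-local perturbation of the uniformly parabolic part ${\cal L}_c^{t,\mu_t}$ — and then identify it with $P_{t,u}\phi$ by a Feynman-Kac argument (It\^o on $[t,u]$, cancelling the drift through the PDE, taking expectation). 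Either the probabilistic or the PDE route reduces the remainder to routine estimates; the main technical choice-point lies in how one handles this regularity gap.
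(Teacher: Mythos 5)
Your proposed ``workaround'' is exactly the paper's proof: the paper solves the terminal-value integro-differential problem $\partial_t\theta+\left({\cal L}_c^{t,\mu_t}+{\cal L}_j^{t,\mu_t}\right)\theta=0$, $\theta(u,\cdot)=\phi(p,\cdot)$, obtains a unique classical solution $\theta\in C^{1,2}([0,u]\times\R^m)$ from Theorem 3.1 of Chapter II of Garroni--Menaldi, and then identifies $\theta$ with $P_{t,u}\phi(p,\cdot)$ by applying It\^o's formula to $\theta(s,X_s^{t,p,x,\mu})$ on $[t,u]$, using the PDE to cancel the drift and taking expectations, which yields both \eqref{eq:propagator} and the stated regularity. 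The flow-differentiation route you sketch first is not attempted in the paper, and you correctly diagnosed why it would fail (only Lipschitz, not $C^2$, control of $\tilde{\alpha},\tilde{\beta}$ under {\Aab}); since your final recommendation coincides with the paper's argument, the proposal is essentially the same approach.
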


\begin{proof}
We first consider the following parabolic equation with respect to $\theta: [0,u]\times\R^m\to\R$ for fixed $p$:
\begin{align}\label{eq:integro}
    \begin{cases}
        \displaystyle \partial_t \theta(t,x)+\left({\cal L}_c^{t,\mu_t}+{\cal L}_j^{t,\mu_t}\right)\theta(t,x)=0, \\[1em]
        \displaystyle \theta(u,x)=\phi(p,x).
    \end{cases}
\end{align}
This is a type of second order integro-differential parabolic
equation with Cauchy boundary. By Theorem 3.1 of chapter II of Ref.\cite{garroni1992green}, we know that \eqref{eq:integro} has a unique classical solution $\theta(t,x)\in C^{1,2}([0,u]\times\R^m)$.

Next, for all $(t,x)\in[0,u]\times\R^m$, by applying It\^o's formula to $\theta(u,X_u^{t,p,x,\mu})$, we obtain
\begin{align}\label{eq:theta}
    \theta(t,x)=&\theta(u,X_u^{t,p,x,\mu})-\int_t^u\left(\partial_s+{\cal L}_c^{s,\mu_s}+{\cal L}_j^{s,\mu_s}\right)\theta(s,X_s^{t,p,x,\mu})ds\nonumber\\
    &-\int_t^u\partial_x \theta(s,X_s^{t,p,x,\mu})\overline{g}(s,X_s^{t,p,x,\mu},\left<\mu_s,\beta(X_s^{t,p,x,\mu},\cdot)\right>)dW_s\nonumber\\
    &-\int_t^u \left(\theta(s,X_{s-}^{t,p,x,\mu})+h(s,X_{s-}^{t,p,x,\mu}))-\theta(s,X_{s-}^{t,p,x,\mu})\right)dM_s. 
\end{align}
Taking expectations on both sides of above equation, from equation \eqref{eq:theta} we deduce that
\begin{align}
    \theta(t,x)&=\Ex\left[\theta(u,X_u^{t,p,x,\mu})-\int_t^u\left(\partial_s+{\cal L}_c^{s,\mu_s}+{\cal L}_j^{s,\mu_s}\right)\theta(s,X_s^{t,p,x,\mu})ds\right]\nonumber\\
    &=\Ex\left[\theta(u,X_u^{t,p,x,\mu})\right]\nonumber\\
    &=\Ex\left[\phi\left(p,X_u^{t,p,x,\mu}\right)\right]\nonumber\\
    &=P_{t,u}\phi(p,x).
\end{align}
 This yields that $P_{\cdot,u}\phi(p,\cdot)\in C^{1,2}([0,u]\times\R^m)$ satisfies the PDE \eqref{eq:propagator}. Thus, we complete the proof.
\end{proof}

\begin{lemma}\label{lem:par_t}
Let conditions of Lemma \ref{lem:propagator} hold. Then for all $0\leq t\leq u\leq T$,
\begin{align}
    \partial_t\left<\mu_t,P_{t,u}\phi\right>=0.
\end{align}
\end{lemma}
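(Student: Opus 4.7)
The plan is to apply Itô's formula to the time-dependent test function $P_{s,u}\phi$ along the trajectory $X_s^{p,x_0,\mu}$ for $s \in [0,t]$, and then average against the initial distribution $\rho_0$. The entire argument reduces to recognizing that, once $\mu$ is fixed as a deterministic flow of measures, the SDE \eqref{eq:Z-FPK} is a (time-inhomogeneous, but non-McKean-Vlasov) jump-diffusion, so the generator identity from Lemma \ref{lem:propagator} applied along its paths will collapse the drift term.

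Concretely, first I would invoke Lemma \ref{lem:propagator} to know that $P_{\cdot,u}\phi(p,\cdot) \in C^{1,2}([0,u]\times\R^m)$, which is exactly the regularity required to apply Itô's formula to the jump-diffusion $X^{p,x_0,\mu}$. Doing so on $[0,t]$ yields
\begin{align*}
P_{t,u}\phi(p,X_t^{p,x_0,\mu}) = P_{0,u}\phi(p,x_0) + \int_0^t \bigl(\partial_s + {\cal L}_c^{s,\mu_s} + {\cal L}_j^{s,\mu_s}\bigr)P_{s,u}\phi(p,X_s^{p,x_0,\mu})\,ds + \text{(mart.)},
\end{align*}
where the martingale piece is the sum of a Brownian stochastic integral against $\overline{g}$ and a pure-jump stochastic integral against $dM_s$. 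By the backward equation \eqref{eq:propagator} in Lemma \ref{lem:propagator}, the integrand in the Lebesgue integral vanishes identically.

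Next, I would take expectations to remove the martingale term, obtaining $\Ex[P_{t,u}\phi(p,X_t^{p,x_0,\mu})] = P_{0,u}\phi(p,x_0)$, and then integrate both sides against $\rho_0(dp,dx_0)$. The left-hand side equals $\langle \mu_t, P_{t,u}\phi\rangle$ by the definitions \eqref{eq:limit-sol-FPK} of $\mu_t$ and \eqref{propagator} of the propagator, while the right-hand side equals the $t$-independent quantity $\langle \mu_0, P_{0,u}\phi\rangle$. Therefore $\langle \mu_t, P_{t,u}\phi\rangle$ is constant on $[0,u]$, which is the desired conclusion.

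The only nontrivial check is justifying that the two martingale increments have zero expectation rather than merely being local martingales. For the Brownian part, $\partial_x P_{s,u}\phi$ is bounded on $[0,u]\times\R^m$ (since $\phi \in C_b^2(E)$ and the $C^{1,2}$-regularity of $P_{\cdot,u}\phi$ from Lemma \ref{lem:propagator} together with the $L^2$-control on $X_s^{p,x_0,\mu}$ from the growth assumption \Agrows\ yields square-integrability of the integrand). For the compensated Poisson part, the integrand $P_{s,u}\phi(p, x + h(s,x)) - P_{s,u}\phi(p,x)$ is uniformly bounded (since $\phi$ is bounded and $h$ is bounded by assumption \Ahlr), while the compensator $p\,\psi(s,\langle\mu_s,I\rangle)\,ds$ is bounded by $p\,C_\psi\,ds$; these together suffice for the true martingale property. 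With these integrability checks in hand, the argument is complete.
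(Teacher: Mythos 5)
Your proposal is correct and follows essentially the same route as the paper: represent $\left<\mu_t,P_{t,u}\phi\right>$ via \eqref{eq:limit-sol-FPK} as $\int_E\Ex[P_{t,u}\phi(p,X_t^{0,p,x_0,\mu})]\rho_0(dp,dx_0)$, apply It\^o's formula, kill the drift using the backward equation \eqref{eq:propagator} from Lemma \ref{lem:propagator}, and take expectations to remove the martingale terms, yielding a $t$-independent quantity. Your added integrability checks for the true-martingale property are a reasonable supplement that the paper's own proof leaves implicit.
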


\begin{proof}
It follows from Theorem \ref{lem:soldetermindMVE}, Lemma \ref{lem:propagator}, equation \eqref{eq:limit-sol-FPK} and an application of It\^o's formula to $P_{t,u}\phi(p, X_t^{0,p,x_0,\mu})$ that
\begin{align}
    &\left<\mu_t,P_{t,u}\phi\right>=\int_{\R^m}\Ex\left[ P_{t,u}\phi(p, X_t^{0,p,x_0,\mu})\right] \rho_0(dp,dx_0)\nonumber\\
    =&\int_{\R^m}\Ex\Big[ P_{0,u}\phi(p,X_0^{0,p,x_0,\mu})+\int_0^t (\partial_s+{\cal L}_c^{s,\mu_s}+{\cal L}_j^{s,\mu_s})P_{s,u}\phi(p,X_s^{0,p,x_0,\mu})ds\nonumber\\
    &+\int_0^t\partial_x P_{s,u}\phi(p,X_s^{0,p,x_0,\mu})\overline{g}(s,X_s^{0,p,x,\mu},\left<\mu_s,\beta(X_s^{0,p,x_0,\mu},\cdot)\right>)dW_s\nonumber\\
    &+\int_0^t \left(P_{s,u}\phi(p,X_{s-}^{0,p,x_0,\mu}+h(s,X_{s-}^{0,p,x_0,\mu}))-P_{s,u}\phi(p,X_{s-}^{0,p,x_0,\mu})\right)dM_s \Big]\rho_0(dp,dx_0)\nonumber\\
    =&\int_{\R^m}\Ex\left[ P_{0,u}\phi(p,X_0^{0,p,x_0,\mu})\right] \rho_0(dp,dx_0).
\end{align}
 It appears that $\left<\mu_t,P_{t,u}\phi\right>$ is independent of $t$, which indicates the desired result.
\end{proof}

The propagator defined by \eqref{propagator} can help us establish the following relation satisfied by $\mu_t^n-\mu_t$ for any $t\in[0,T]$:

\begin{lemma}\label{lem:weakconverge}
Let conditions of Lemma \ref{lem:propagator} hold. Then for any $t\in[0,T]$, it holds that
\begin{align}\label{eq:weakconverge}
    &\left<\mu_t^n-\mu_t,\phi\right>\nonumber\\
    =&\left<\mu_0^n-\mu_0,P_{0,t}\phi\right>+\frac{1}{n}\sum_{i=1}^{n}\int_0^t\partial_x P_{s,t}\phi(p_i,X_s^i)\overline{g}(s,X_s^i,\frac{1}{n}\sum_{j=1}^{n}\beta(X_s^i,X_s^j))dW_s^i\nonumber\\
    &+\frac{1}{n}\sum_{i=1}^{n}\int_0^t \left(P_{s,t}\phi(p_i,X_{s-}^i+h(s,X_{s-}^i))-P_{s,t}\phi(p_i,X_{s-}^i)\right)dM_s^i.
\end{align} 
\end{lemma}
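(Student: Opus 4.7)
The plan is to apply It\^o's formula pathwise to $s\mapsto P_{s,t}\phi(p_i,X_s^i)$ on $[0,t]$ for each particle $i$, sum over $i$, divide by $n$, and subtract the identity $\langle\mu_t,\phi\rangle=\langle\mu_0,P_{0,t}\phi\rangle$ obtained from Lemma \ref{lem:par_t}. The two key enablers are the $C^{1,2}$-regularity of the propagator supplied by Lemma \ref{lem:propagator} and the backward Kolmogorov-type equation $\partial_s P_{s,t}\phi+({\cal L}_c^{s,\mu_s}+{\cal L}_j^{s,\mu_s})P_{s,t}\phi=0$ with terminal condition $P_{t,t}\phi=\phi$.

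Concretely, fix $t\in[0,T]$ and $i\in\{1,\ldots,n\}$. Applying It\^o's formula to the time-dependent $C^{1,2}$ map $(s,x)\mapsto P_{s,t}\phi(p_i,x)$ along the $i$-th particle of \eqref{eq:Intensityimfm} and using $P_{t,t}\phi=\phi$, one obtains
\[
\phi(p_i,X_t^i)-P_{0,t}\phi(p_i,X_0^i)=\text{(drift)}_i+\text{(}dW^i\text{-martingale)}_i+\text{(}dM^i\text{-martingale)}_i,
\]
where the drift integrand equals $(\partial_s+{\cal L}_c^{s,\mu_s^n}+{\cal L}_j^{s,\mu_s^n})P_{s,t}\phi(p_i,X_s^i)$; note that the generator of the $i$-th particle involves $\mu_s^n$ through the empirical averages $\tfrac1n\sum_j\alpha(X_s^i,X_s^j)$, $\tfrac1n\sum_j\beta(X_s^i,X_s^j)$ and $\overline X_s=\langle\mu_s^n,I\rangle$. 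The Brownian and compensated-Poisson integrals produced by It\^o's formula are exactly the $dW^i$- and $dM^i$-terms on the RHS of the claimed identity. Substituting the backward PDE from Lemma \ref{lem:propagator} reduces the drift integrand to $({\cal L}_c^{s,\mu_s^n}-{\cal L}_c^{s,\mu_s}+{\cal L}_j^{s,\mu_s^n}-{\cal L}_j^{s,\mu_s})P_{s,t}\phi(p_i,X_s^i)$. Averaging over $i$ rewrites the LHS as $\langle\mu_t^n,\phi\rangle-\langle\mu_0^n,P_{0,t}\phi\rangle$, and subtracting Lemma \ref{lem:par_t}'s identity $\langle\mu_t,\phi\rangle-\langle\mu_0,P_{0,t}\phi\rangle=0$ yields the stated formula modulo that residual drift.

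The main obstacle I anticipate is precisely this mismatch between $\mu_s^n$ (driving the particles) and $\mu_s$ (driving the backward PDE): the computation above genuinely produces an extra term $\int_0^t\langle\mu_s^n,({\cal L}_c^{s,\mu_s^n}-{\cal L}_c^{s,\mu_s}+{\cal L}_j^{s,\mu_s^n}-{\cal L}_j^{s,\mu_s})P_{s,t}\phi\rangle\,ds$ not displayed in the statement. In the clean write-up this term must either be absorbed into the martingale decomposition (which would require replacing the propagator by one built from $\mu^n$, i.e.\ revisiting Lemma \ref{lem:propagator} with $\mu$ swapped for $\mu^n$) or else treated as an $O(\|\mu^n-\mu\|)$ remainder that will be controlled in the subsequent convergence-rate analysis; I expect the author tacitly adopts the latter viewpoint. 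A secondary technicality is promoting the two stochastic integrals from local to genuine martingales, which should follow from the boundedness of $\phi\in C_b^2(E)$, the $C^{1,2}$-bounds on $P_{s,t}\phi$ on compacts, the boundedness of $\psi$ and of $h$ from \Ahlr, and the second-moment estimates on $X^i$ provided by Lemma \ref{lem:solsystem2}.
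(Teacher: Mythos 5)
Your route coincides with the paper's: It\^o's formula for $s\mapsto P_{s,t}\phi(p_i,X_s^i)$, the backward equation \eqref{eq:propagator} from Lemma \ref{lem:propagator}, averaging over $i$, and subtracting the conservation law $\langle\mu_t,\phi\rangle=\langle\mu_0,P_{0,t}\phi\rangle$ supplied by Lemma \ref{lem:par_t}. The place where you stop is exactly the crux: since the $i$-th particle of \eqref{eq:Intensityimfm} is driven by the empirical quantities $\tfrac1n\sum_j\alpha(X_s^i,X_s^j)$, $\tfrac1n\sum_j\beta(X_s^i,X_s^j)$ and the intensity $p_i\psi(s,\overline{X}_s)$, It\^o's formula yields the drift $(\partial_s+{\cal L}_c^{s,\mu_s^n}+{\cal L}_j^{s,\mu_s^n})P_{s,t}\phi(p_i,X_s^i)$, while \eqref{eq:propagator} annihilates only the version with $\mu_s$; after averaging, the term $\int_0^t\langle\mu_s^n,({\cal L}_c^{s,\mu_s^n}-{\cal L}_c^{s,\mu_s}+{\cal L}_j^{s,\mu_s^n}-{\cal L}_j^{s,\mu_s})P_{s,t}\phi\rangle\,ds$ survives. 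So, as you yourself acknowledge, your argument establishes \eqref{eq:weakconverge} only modulo this remainder, i.e.\ it does not prove the lemma as stated.

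You should also know that the paper's own proof does not close this gap: it simply writes the It\^o drift as $(\partial_s+{\cal L}_c^{s,\mu_s}+{\cal L}_j^{s,\mu_s})P_{s,t}\phi(p_i,X_s^i)$ --- evaluating the measure argument of the generator at the limit $\mu_s$ while, in the very same display, keeping the empirical average inside $\overline g$ in the $dW^i$ integral --- and then cancels it against \eqref{eq:propagator}. Neither of the two repairs you anticipate is carried out there: the remainder is not absorbed by rebuilding the propagator from $\mu^n$, nor is it estimated afterwards (the convergence theorem decomposes $\langle\mu_t^n-\mu_t,\phi\rangle$ into exactly the three terms appearing in \eqref{eq:weakconverge}). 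Unless the dependence of $\overline f$, $\overline g$, $\psi$ on their measure-determined arguments is degenerate, the exact identity requires a justification that is missing; a rigorous treatment would keep your extra term on the right-hand side and control it via the Lipschitz properties in {\Afg} and {\Aab} (plus a Lipschitz condition on $\psi$) and bounds on $\partial_x P_{s,t}\phi$ and $\partial_x^2 P_{s,t}\phi$, feeding into a Gr\"onwall argument in the proof of the main theorem. Your secondary remark about promoting the stochastic integrals from local to genuine martingales is a valid point and is handled correctly by the integrability you cite.
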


\begin{proof}
Recall the state process of our particle system $X^i=(X_t^i)_{t\in[0,T]}$ defined by \eqref{eq:Intensityimfm} for
$i\geq1$. By applying It\^o's formula to $P_{r,t}\phi(p_i,X_t^i)$ for $r\in[0,t]$, it follows from Lemma \ref{lem:propagator} that
\begin{align}
    &P_{r,t}\phi(p_i, X_t^i)\nonumber\\
    =&P_{0,t}\phi(p_i,X_0^i)+\int_0^r(\partial_s+{\cal L}_c^{s,\mu_s}+{\cal L}_j^{s,\mu_s})P_{s,t}\phi(p_i,X_s^i)ds\nonumber\\
    &+\int_0^r\partial_x P_{s,t}\phi(p_i,X_s^i)\overline{g}\left(s,X_s^i,\frac{1}{n}\sum_{j=1}^{n}\beta(X_s^i,X_s^j)\right)dW_s^i\nonumber\\
    &+\int_0^r \left(P_{s,t}\phi(p_i,X_{s-}^i+h(s,X_{s-}^i))-P_{s,t}\phi(p_i,X_{s-}^i)\right)dM_s^i\nonumber\\
    =&P_{0,t}\phi(p_i,X_0^i)+\int_0^r\partial_x P_{s,t}\phi(p_i,X_s^i)\overline{g}\left(s,X_s^i,\frac{1}{n}\sum_{j=1}^{n}\beta(X_s^i,X_s^j)\right)dW_s^i\nonumber\\
    &+\int_0^r \left(P_{s,t}\phi(p_i,X_{s-}^i+h(s,X_{s-}^i))-P_{s,t}\phi(p_i,X_{s-}^i)\right)dM_s^i,
\end{align}
then in view of the definition of $\mu^n$, we have
\begin{align}\label{eq:weakconverge1}
    &\left<\mu_r^n,P_{r,t}\phi\right>=\frac{1}{n}\sum_{i=1}^{n}P_{r,t}\phi(p_i,X_{r}^i)\nonumber\\
    =&\left<\mu_0^n,P_{0,t}\phi\right>+\frac{1}{n}\sum_{i=1}^{n}\int_0^r\partial_x P_{s,t}\phi\left(p_i,X_s^i\right)\overline{g}\left(s,X_s^i,\frac{1}{n}\sum_{j=1}^{n}\beta(X_s^i,X_s^j)\right)dW_s^i\nonumber\\
    &+\frac{1}{n}\sum_{i=1}^{n}\int_0^r \left(P_{s,t}\phi(p_i,X_{s-}^i+h(s,X_{s-}^i))-P_{s,T}\phi(p_i,X_{s-}^i)\right)dM_s^i.
\end{align}
Note that Lemma \ref{lem:par_t} implies 
\begin{align}\label{eq:weakconverge2}
\left<\mu_r,P_{r,t}\phi\right>=\left<\mu_0,P_{0,t}\phi\right>, \ for\ all\ r\in[0,t].
\end{align} 
From \eqref{eq:weakconverge1} and \eqref{eq:weakconverge2}, we deduce that
\begin{align}\label{eq:weakconvergepf}
    &\left<\mu_r^n-\mu_r,P_{r,t}\phi\right>\nonumber\\
    =&\left<\mu_0^n-\mu_0,P_{0,t}\phi\right>+\frac{1}{n}\sum_{i=1}^{n}\int_0^r\partial_x P_{s,t}\phi(p_i,X_s^i)\overline{g}\left(s,X_s^i,\frac{1}{n}\sum_{j=1}^{n}\beta(X_s^i,X_s^j)\right)dW_s^i\nonumber\\
    &+\frac{1}{n}\sum_{i=1}^{n}\int_0^r \left(P_{s,T}\phi(p_i,X_{s-}^i+h(s,X_{s-}^i))-P_{s,T}\phi(p_i,X_{s-}^i)\right)dM_s^i.
\end{align}
In \eqref{eq:weakconvergepf}, let $r=t$ and note that $P_{t,t}\phi=\phi$ by virtue of Lemma \ref{lem:propagator}, then we get the desired result.
\end{proof}

Next we give the estimate of the gradient $\partial_x P_{s,t}\phi(p,x)$. To this purpose, we impose the following one-side Lipschitz condition:
\begin{itemize}
\item[\Afgoneside] The coefficient $\overline{f}:[0,T]\times\R^{m}\times \R \to\R^m$ satisfies a one-side Lipschitz condition. Namely, there exists a constant $L_{\overline{f}}$ such that for all $t\in[0,T]$, $x_1,x_2\in \R^{m}$ and $y_1,y_2\in \R$,
\begin{align*}
    \left<x_1-x_2, \ \overline{f}(t,x_1,y_1)-\overline{f}(t,x_2,y_2)\right>\leq L_{\overline{f}}(|x_1-x_2|^2+|y_1-y_2|^2).
\end{align*}
\end{itemize}

\begin{lemma}\label{lem:partialx}
Let conditions of Lemma \ref{lem:propagator} and {\Afgoneside} hold, suppose the function $\phi$ satisfies that:
\begin{enumerate}[label=(\roman*)]
    \item $\phi\in C_b^2(E)$;
    \item $\phi$ is Lipschitz continuous and
    \begin{align*}
        \|\phi\|_{\rm Lip}=\sup_{(p_1,x_1)\neq(p_2,x_2)\in E}\frac{\left|\phi(p_1,x_1)-\phi(p_2,x_2)\right|}{\left|(p_1-p_2,x_1-x_2)\right|}\leq1.
    \end{align*}
\end{enumerate}
Then for all $0\leq s\leq t\leq T$ and $(p,x)\in E$:
\begin{align}
    |\partial_x P_{s,t}\phi(p,x)|\leq{\rm e}^{Cp(t-s)},
\end{align}
where $C=C_{\overline{f},h,K,\psi}>0$ is a constant.
\end{lemma}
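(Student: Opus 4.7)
The plan is to represent $\partial_x P_{s,t}\phi(p,x)$ through the first variation process of the flow $x\mapsto X_u^{s,p,x,\mu}$ and then close a Gr\"onwall estimate on the expected norm of that Jacobian. The $C^2$ regularity of $\overline{f},\overline{g}$ imposed in Lemma~\ref{lem:propagator}, the Lipschitz property of $\alpha,\beta$ in \Aab{} (which in turn makes $\tilde{\alpha}(u,\cdot)$ and $\tilde{\beta}(u,\cdot)$ globally Lipschitz with constant $J$), and \Ahlr{} on $h$ together place us in the framework of Kunita's flow theorem for jump-diffusions, yielding an a.s.\ differentiable modification of $x\mapsto X_u^{s,p,x,\mu}$. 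Setting $U_u:=\partial_x X_u^{s,p,x,\mu}$ and interchanging $\partial_x$ with $\Ex$ via dominated convergence gives $\partial_x P_{s,t}\phi(p,x)=\Ex[\partial_x\phi(p,X_t^{s,p,x,\mu})^{\top}U_t]$; since $\|\phi\|_{\rm Lip}\leq 1$ forces $|\partial_x\phi|\leq 1$, it suffices to control $\sup_{|\xi|=1}\Ex|U_t\xi|$.

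Differentiating \eqref{eq:limit-sol-FPK2} in the initial state yields the linear SDE $U_s=I_m$,
\begin{align*}
dU_u=\partial_x F(u,X_u)U_u\,du+\partial_x G(u,X_u)U_u\,dW_u+\partial_x h(u,X_{u-})U_{u-}\,dN_u,
\end{align*}
with $F(u,x):=\overline{f}(u,x,\tilde{\alpha}(u,x))$ and $G(u,x):=\overline{g}(u,x,\tilde{\beta}(u,x))$. I would fix a unit vector $\xi\in\R^m$, set $V_u:=U_u\xi$, apply It\^o's formula to $|V_u|^2$, use $dN_u=dM_u+p\psi(u,\langle\mu_u,I\rangle)\,du$, and take expectations to obtain
\begin{align*}
\Ex|V_u|^2 &= 1 + \Ex\int_s^u \bigl[2V_r^{\top}\partial_x F(r,X_r)V_r+\|\partial_x G(r,X_r)V_r\|^2\bigr] dr\\
&\quad + \Ex\int_s^u \bigl(|V_r+\partial_x h(r,X_r)V_r|^2-|V_r|^2\bigr)\,p\,\psi(r,\langle\mu_r,I\rangle)\,dr.
\end{align*}

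Each integrand is then bounded by a constant multiple of $|V_r|^2$: the drift via \Afgoneside{} combined with the $J$-Lipschitz bound on $\tilde{\alpha}(u,\cdot)$ from \Aab{}, giving $2V_r^{\top}\partial_x F V_r\leq c_1|V_r|^2$; the jump via \Ahlr{}, which yields $|I+\partial_x h|^2-1\leq 2L_h+L_h^2$, combined with $\psi\leq C_\psi$, producing the $p$-proportional contribution $(2L_h+L_h^2)C_\psi\,p\,|V_r|^2$; and the diffusion via a Lipschitz-type estimate on $\partial_x G$ inherited from \Afg{}, \Aab{} and the $C^2$ hypothesis. Summing and invoking Gr\"onwall's lemma produces $\Ex|V_u|^2\leq\exp\bigl((c_1+c_2p)(u-s)\bigr)$; Cauchy--Schwarz together with absorption of $c_1$ into the overall constant $C$ then delivers the stated bound. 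The main obstacle is the diffusion estimate: \Afg{} only gives a local Lipschitz bound on $\overline{g}$, so a globally uniform bound on $\|\partial_x G\|$ is not automatic, and I expect this to be handled either by an implicit global boundedness of $\partial_x\overline{g}$ (consistent with the $C^2$ assumption together with the growth condition \Agrows{}) or via a localization $\tau_R\uparrow T$ that exploits the moment control on $X^{s,p,x,\mu}$ from Lemma~\ref{lem:solsystem3}; the rigorous differentiability of the flow and the interchange of $\partial_x$ with $\Ex$ are secondary but standard points.
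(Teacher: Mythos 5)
Your route is genuinely different from the paper's. You differentiate the stochastic flow $x\mapsto X_u^{s,p,x,\mu}$ and run a Gr\"onwall estimate on the first-variation process $U_t$, whereas the paper never differentiates the flow at all: it takes two solutions $X^1,X^2$ of \eqref{eq:limit-sol-FPK2} driven by the same $W$ and $N$ but started from $x_1$ and $x_2$, applies It\^o's formula to $|X^1_t-X^2_t|^2$, kills the martingale parts by taking expectations, bounds the drift term via {\Afgoneside} combined with the global Lipschitz bound of {\Aab} (which controls $|\left<\mu_r,\beta(X^1_r,\cdot)\right>-\left<\mu_r,\beta(X^2_r,\cdot)\right>|$ by $J|X^1_r-X^2_r|$), bounds the jump term via {\Ahlr} and $\psi\leq C_{\psi}$, and concludes by Gr\"onwall that $\Ex[|X^1_t-X^2_t|^2]\leq|x_1-x_2|^2{\rm e}^{Cp(t-s)}$. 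Since $\|\phi\|_{\rm Lip}\leq1$ gives $|P_{s,t}\phi(p,x_1)-P_{s,t}\phi(p,x_2)|^2\leq\Ex[|X^1_t-X^2_t|^2]$, and differentiability of $P_{s,t}\phi(p,\cdot)$ is already supplied by Lemma \ref{lem:propagator}, the gradient bound follows from the difference quotient. This synchronous-coupling argument buys exactly what your plan struggles with: no pathwise differentiability of the flow, no interchange of $\partial_x$ with $\Ex$, and no pointwise control of Jacobians of the coefficients.

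The gap in your proposal is the one you flag and do not resolve: the estimate $\Ex\int_s^u\|\partial_x G(r,X_r)V_r\|^2dr\leq c\,\Ex\int_s^u|V_r|^2dr$ is not available under the stated hypotheses, because {\Afg}, {\Agrows} and {\Aghgrow} control $\overline{g}$ itself (locally Lipschitz, with growth) but not $\partial_x\overline{g}$, so $\|\partial_x G(r,X_r)\|$ may grow without bound along the trajectory. Neither of your proposed fixes closes this: assuming $\partial_x\overline{g}$ bounded is an additional hypothesis not in the lemma, and localizing at $\tau_R$ leaves a Gr\"onwall constant of size $\sup_{|x|\leq R}\|\partial_x G\|$ that blows up as $R\to\infty$; the moment bounds from Lemma \ref{lem:solsystem3} do not control exponentials of such quantities, so the cutoff cannot be removed. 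The same lack of derivative control also makes the prerequisite step -- existence of a differentiable modification of the flow and the interchange $\partial_x\Ex=\Ex\partial_x$ -- nontrivial rather than ``standard'' here. (Your drift bound is fine: {\Afgoneside} with {\Aab} does transfer to a one-sided Jacobian bound $\xi^{\top}\partial_x F\,\xi\leq L_{\overline{f}}(1+J^2)|\xi|^2$; it is only the diffusion term that has no analogue.) For fairness, note that the paper's displayed It\^o expansion silently omits the correction term $\int_s^t\|\overline{g}(r,X^1_r,\cdot)-\overline{g}(r,X^2_r,\cdot)\|^2dr$, keeping only the martingale cross term, so a fully rigorous version of either argument needs some global condition on the $x$-dependence of $\overline{g}$; still, within the computation the paper actually performs, the obstacle you hit simply does not arise. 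A minor further point: absorbing the $p$-free constant $c_1$ into ${\rm e}^{Cp(t-s)}$ is not literally valid for small $p$, though the paper's own constant has the same slack.
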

\begin{remark}
    Based on the proof below, condition (ii) in Lemma \ref{lem:partialx} can be weaken as $\phi$ is only Lipschitz continuous with respect to $x$ with Lipschitz coefficient no more that $1$. In other words, $|\phi(p,x_1)-\phi(p,x_2)|\leq\left|x_1-x_2\right|$ for all $p>0$ and $x_1, x_2\in R^m$.
\end{remark}
\begin{proof}
For simplicity, we use $X_r^1$ and $X_r^2$ instead of $X_r^{s,p,x_1,\mu}$ and $X_r^{s,p,x_2,\mu}$ in this proof. Note that $\|\phi\|_{\rm Lip}\leq 1$, then in the view of \eqref{propagator} and \eqref{eq:limit-sol-FPK2}, for all $x_1,x_2\in \R^m$ and $p\in\R_{+}$, it follows that
\begin{align}
    |P_{s,t}\phi(p,x_1)-P_{s,t}\phi(p,x_2)|^2=&\left|\Ex\left[\phi(p,X_t^1)-\phi(p,X_t^2)\right]\right|^2\leq\Ex\left[|X_t^1-X_t^2|^2\right].
\end{align}
An application of It\^o's formula to $|X_t^1-X_t^2|^2$ yields that
\begin{align}\label{diff:P}
    &\Ex\left[|X_t^1-X_t^2|^2\right]\nonumber\\
    =&|X_s^1-X_s^2|^2+\Ex\left[\int_s^t\left<X_r^1-X_r^2,\overline{f}\left(r,X_r^1,\left<\mu_r,\beta(X_r^1,\cdot)\right>\right)-\overline{f}\left(r,X_r^2,\left<\mu_r,\beta(X_r^2,\cdot)\right>\right)\right>dr\right]\nonumber\\
    &+\Ex\left[\int_s^t\left(\left|\left(X_r^1-X_r^2\right)+\left(h(r,X_{r-}^1)-h(r,X_{r-}^2)\right)\right|^2-|X_r^1-X_r^2|^2\right)p\psi(r,\left<\mu_r,I\right>)dr\right]\nonumber\\
    &+\Ex\left[\int_s^t\left<X_r^1-X_r^2,\overline{g}\left(r,X_r^1,\left<\mu_r,\beta(X_r^1,\cdot)\right>\right)-\overline{g}\left(r,X_r^2,\left<\mu_r,\beta(X_r^2,\cdot)\right>\right)\right>dW_r\right]\nonumber\\
    &+\Ex\left[\int_s^t\left(\left|\left(X_r^1-X_r^2\right)+\left(h(r,X_{r-}^1)-h(r,X_{r-}^2)\right)\right|^2-|X_r^1-X_r^2|^2\right)dM_r\right]\nonumber\\
    =&|x_1-x_2|^2+\Ex\left[\int_s^t\left<X_r^1-X_r^2,\overline{f}\left(r,X_r^1,\left<\mu_r,\beta(X_r^1,\cdot)\right>\right)-\overline{f}\left(r,X_r^2,\left<\mu_r,\beta(X_r^2,\cdot)\right>\right)\right>dr\right]\nonumber\\
    &+\Ex\left[\int_s^t\left(\left|\left(X_r^1-X_r^2\right)+\left(h(r,X_{r-}^1)-h(r,X_{r-}^2)\right)\right|^2-|X_r^1-X_r^2|^2\right)p\psi(r,\left<\mu_r,I\right>)dr\right].
\end{align}
By assumptions {\Afgoneside} and {\Aab}, we have the following estimate:
\begin{align}\label{diff:P1}
    &\Ex\left[\int_s^t\left<X_r^1-X_r^2,\overline{f}\left(r,X_r^1,\left<\mu_r,\beta(X_r^1,\cdot)\right>\right)-\overline{f}\left(r,X_r^2,\left<\mu_r,\beta(X_r^2,\cdot)\right>\right)\right>dr\right]\nonumber\\
    \leq&L\Ex\left[\int_s^t \left|X_r^1-X_r^2\right|^2+\left|\left<\mu_r,\beta(X_r^1,\cdot)\right>-\left<\mu_r,\beta(X_r^2,\cdot)\right>\right|^2dr\right]\nonumber\\
    \leq&L_{\overline{f}}(1+K^2)\Ex\left[\int_s^t \left|X_r^1-X_r^2\right|^2dr\right].
\end{align}
By assumption {\Ahlr} and noting that $|\psi(\cdot,\cdot)|\leq C_{\psi}$, it follows that
\begin{align}\label{diff:P2}
    &\Ex\left[\int_s^t\left(\left|\left(X_r^1-X_r^2\right)+\left(h(r,X_{r-}^1)-h(r,X_{r-}^2)\right)\right|^2-|X_r^1-X_r^2|^2\right)p\psi(r,\left<\mu_r,I\right>)dr\right]\nonumber\\
    \leq&C_{\psi}p\Ex\left[\int_s^t\left(|X_r^1-X_r^2|^2+2\left|h(r,X_{r-}^1)-h(r,X_{r-}^2)\right|^2\right)dr\right]\nonumber\\
    \leq&2C_{\psi}(1+L_h^2)p\Ex\left[\int_s^t\left|X_r^1-X_r^2\right|^2dr\right]
\end{align}
We deduce from \eqref{diff:P}, \eqref{diff:P1} and \eqref{diff:P2} that there exists a constant $C=C_{\overline{f},h,K,\psi}$ such that
\begin{align}
    \Ex\left[|X_t^1-X_t^2|^2\right]\leq |x_1-x_2|^2+Cp\int_s^t\Ex\left[|X_r^1-X_r^2|^2\right]dr.
\end{align}

Then, the Gronwall's lemma yields that
\begin{align}
    \Ex\left[|X_t^{s,p,x_1,\mu}-X_t^{s,p,x_2,\mu}|^2\right]\leq |x_1-x_2|^2{\rm e}^{Cp(t-s)}.
\end{align}
Therefore, we have
\begin{align}
    |\partial_x P_{s,t}\phi(p,x_1)|=&\lim_{x_2\rightarrow x_1}\dfrac{|P_{s,t}\phi(p,x_1)-P_{s,t}\phi(p,x_2)|}{|x_1-x_2|}\nonumber\\
    \leq&\sqrt{\dfrac{\Ex\left[|X_t^1-X_t^2|^2\right]}{|x_1-x_2|^2}}\nonumber\\
    \leq&{\rm e}^{\frac{1}{2}Cp(t-s)}.
\end{align}
This completes the proof of the lemma.
\end{proof}

\subsection{Metric and convergence of \texorpdfstring{$\mu^n$}{} to \texorpdfstring{$\mu$}{}}

Next we establish a convergence result to the solution of our FPK function. To this end, we need an appropriate metric between $\mu^n$ and $\mu$. Let ${\cal P}(E)$ be the set of all finite measures $\nu$ on ${\cal B}(E)$ such that $\nu (E)\leq 1$. To make a compact space, we add an extra point $\star$ to $E$, and denote that $E_{\star}:=E\cup\left\{\star\right\}$. Let $E$ be topologized by a topology $\Tau$, we then define a topology $\Tau^{\star}$ for $E_{\star}$ as follows: 
\begin{enumerate}[label=(\roman*)]
    \item $\Tau\subset \Tau^{\star}$;
    \item For each compact set $C\subset E$, define $U_C\in\Tau^{\star}$ by $U_C:=(E\setminus C)\cup\{\star\}$.
\end{enumerate}

We give a bijection $\iota: {\cal P}(E)\to {\cal P}(E_{\star})$ by
\begin{align}
    (\iota\nu)(A):= \nu(A\cap E)+(1-\nu(E))\delta_{\left\{\star\right\}}(A)
\end{align}
for all $A\in{\cal B}(E_{\star})$ and $\nu\in{\cal P}(E)$, where $\delta$ denotes the Dirac delta measure. Then we define the integral of $\nu\in{\cal P}(E)$ with respect to a measurable function $\phi:E_{\star}\to\R$ as:
\begin{align}\label{exinteg}
    \int_{E_{\star}}\phi(x)(\iota\nu)(dx)=\int_{E}\phi(x)\nu(dx)+\phi(\star)(1-\nu(E)).
\end{align}

For the parameter $q\geq 2$, we establish the metric $d_{q,T}$ between $\mu=(\mu_t)_{t\in[0,T]}$ and $\mu^n=(\mu^n_t)_{t\in[0,T]}$ as follows (see Ref.\cite{Eric2014Meanfield}):
\begin{align}\label{metric}
    d_{q,T}(\mu^n, \mu):=\sup_{t\in[0,T]}d_{\text{BL}}\left(\iota\mu^n_t, \iota\mu_t\right).
\end{align}
The $d_{\text{BL}}$ here is defined as
\begin{align}\label{metric_BL}
    d_{\text{BL}}\left(\iota\mu^n_t, \iota\mu_t\right)=\sup_{\phi\in{\cal R}_1}\Ex\Bigg[\left|\int_{E_{\star}}\phi(x)\left(\iota\mu^n_t-\iota\mu_t\right)(dx)\right|^q\Bigg]^{\frac{1}{q}},
\end{align}
where ${\cal R}_1$ is the set of functions $\phi:E_{\star}\to\R$ that are bounded, twice continuously differentiable and Lipschitz continuous satisfying that $\|\phi\|_{\infty}+\|\phi\|_{\rm Lip}\leq1$, where $\|\phi\|_{\infty}:=\inf\left\{C>0:\ |\phi(x)|\leq C, \ \forall x\in E_{\star}\right\}$.  

The metric $d(\cdot,\cdot)$ on $E_{\star}$ is defined as in Ref.\cite{mandelkern1989metrization}, that is: fix $x_0\in E$ and let $l(x):=\left(1+|x-x_0|\right)^{-1}$ for $x\in E$, and
\begin{align*}
d(x,y):=
\begin{cases}
    \displaystyle |x-y|\wedge|l(x)-l(y)|, &x,y\in E;\\
    \displaystyle l(x), &x\in E, y=\star;\\
    \displaystyle 0, &x=y=\star.
\end{cases}
\end{align*}
Then we have
\begin{align}
    \|\phi\|_{\rm Lip}=\sup_{x\neq y, \ x,y\in E_{\star}}\frac{|\phi(x)-\phi(y)|}{d(x,y)},
\end{align}
hence for any $\phi\in{\cal R}_1$, it holds for all $x,y\in E$ that
\begin{align}
    |\phi(x)-\phi(y)|\leq d(x,y)=|x-y|\wedge|l(x)-l(y)|\leq|x-y|.
\end{align}
Namely, the constraint of $\phi$ on $E$ is also bounded Lipschitz continuous with $\|\phi\|_{\rm Lip}\leq1$.

In view of \eqref{exinteg}, we have
\begin{align}
    &\int_{E_{\star}}\phi(x)\left(\iota\mu^n_t-\iota\mu_t\right)(dx)\nonumber\\
    =&\int_{E}\phi(x)\left(\mu^n_t-\mu_t\right)(dx)+\phi(\star)\left(\left(1-\mu^n_t(E)\right)-\left(1-\mu_t(E)\right)\right)\nonumber\\
    =&\int_{E}\phi(x)\left(\mu^n_t-\mu_t\right)(dx)\nonumber\\
    =&\left<\mu^n_t-\mu_t, \phi\right>.
\end{align}
Therefore,
\begin{align}
    d_{q,T}(\mu^n, \mu)=\sup_{t\in[0,T]}\sup_{\phi\in{\cal R}_1}\Ex\Big[\left|\left<\mu^n_t-\mu_t, \phi\right>\right|^q\Big]^{\frac{1}{q}}.
\end{align}

To obtain the convergence results, we need the following assumption:
\begin{itemize}
\item[\Aghgrow] The coefficients $\overline{g}:[0,T]\times\R^m\times\R\to\R^{md}$ and $h:[0,T]\times\R^m\to\R^m$ satisfy a growth condition. Namely, there exists a constant $K$ such that for all $t\in[0,T]$, $x\in\R^{m}$ and $y\in\R$,
\begin{align*}
    |\overline{g}(t,x,y))|^2&\leq K(1+|x|^2+|y|^2),\\
    |h(t,x)|^2&\leq K(1+|x|^2).
\end{align*}
\end{itemize}

\begin{theorem}
Let conditions of Lemma \ref{lem:partialx} and {\Aghgrow} hold, also suppose that $\Ex[|X_0^i|^q]<\infty$ for all $q\geq 2$ and $i\geq 1$. Then for fixed $T>0$ and parameters $\kappa>q$ and $m>0$, there exists a constant $C_q=C_{q,T,\overline{f},\overline{g},h,\psi,\{p_i\}_{i\geq1}, \kappa, m}>0$ which is independent of $n$, such that for all $n\geq1$:
\begin{align}
    d_{q,T}(\mu^n, \mu)\leq C_q\left(\gamma(\kappa,q,m,n)+\frac{1}{n^{1-\frac{1}{q}}}\right),
\end{align}
where $d_{q,T}(\cdot,\cdot)$ is defined by \eqref{metric}, and
\begin{align}\label{def:gamma}
    \gamma(\kappa, q, m,n):=\left\{
    \begin{array}{lcr}
        n^{-\frac{1}{2}}+n^{-\frac{\kappa-q}{\kappa}}, &q>\frac{m}{2}, \kappa\neq 2q;\\
        n^{-\frac{1}{2}}\ln(1+n)+n^{-\frac{\kappa-q}{\kappa}}, &q=\frac{m}{2}, \kappa\neq 2q;\\
        n^{-\frac{q}{m}}+n^{-\frac{\kappa-q}{\kappa}}, &q<\frac{m}{2}, \kappa\neq\frac{m}{m-q}.
    \end{array}
    \right.
\end{align}
\end{theorem}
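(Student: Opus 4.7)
The plan is to start from the exact decomposition given by Lemma \ref{lem:weakconverge}, which for any $\phi\in\mathcal{R}_1$ and $t\in[0,T]$ writes
\[
\left<\mu_t^n-\mu_t,\phi\right> = A_\phi^n(t) + B_\phi^n(t) + J_\phi^n(t),
\]
where $A_\phi^n(t):=\left<\mu_0^n-\mu_0,P_{0,t}\phi\right>$ is the initial fluctuation, $B_\phi^n(t)$ is the $\frac{1}{n}$-averaged Brownian stochastic integral, and $J_\phi^n(t)$ is the $\frac{1}{n}$-averaged compensated Poisson integral. Since $d_{q,T}(\mu^n,\mu)=\sup_{t,\phi\in\mathcal{R}_1}\Ex[|A+B+J|^q]^{1/q}$, the task is to estimate the three pieces in $L^q$.

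For the initial term $A_\phi^n$, I would use Lemma \ref{lem:partialx} to obtain $\|P_{0,t}\phi(p,\cdot)\|_{\mathrm{Lip}}\le e^{C p T}$. Since $p_i\to p$ under {\Ai}, $\sup_i p_i<\infty$, so this Lipschitz constant is uniformly bounded across the $p_i$-marks appearing in $\mu_0^n$. Combined with $\|\phi\|_\infty\le 1$, this bounds $|A_\phi^n(t)|$ by a constant multiple of the bounded-Lipschitz distance between $\mu_0^n$ and $\mu_0$ on $E$. The classical Fournier--Guillin concentration bound for the empirical measure of $n$ i.i.d.\ samples with finite $\kappa$-th moment then delivers
\[
\sup_{t\in[0,T]}\sup_{\phi\in\mathcal{R}_1}\Ex\big[|A_\phi^n(t)|^q\big]^{1/q}\le C_q\,\gamma(\kappa,q,m,n),
\]
which is precisely the first contribution. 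A mild adjustment is needed because the marks $p_i$ are deterministic (not i.i.d.); I would split by the triangle inequality into the error in the $p$-coordinate (which decays at the rate $\sup_{i\ge n}|p_i-p|$, absorbable into the constant $C_q$) and the Wasserstein error in the $x$-coordinate where Fournier--Guillin applies directly.

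For the two martingale terms $B_\phi^n$ and $J_\phi^n$, I would apply the Burkholder--Davis--Gundy inequality to each. The quadratic variation of $n B_\phi^n(t)$ is $\sum_{i=1}^n\int_0^t|\partial_x P_{s,t}\phi(p_i,X_s^i)|^2|\overline{g}(s,X_s^i,\cdot)|^2 ds$, whose integrand is bounded by $e^{2Cp_iT}K(1+|X_s^i|^2+|\left<\mu_s^n,\beta(X_s^i,\cdot)\right>|^2)$ by Lemma \ref{lem:partialx} and {\Aghgrow}; the Poisson case is analogous using {\Ahlr} and the boundedness of $h$ there. A preliminary Gronwall estimate under {\Agrows}, {\Aab} and $\Ex[|X_0^i|^q]<\infty$ gives $\sup_{i,t\le T}\Ex[|X_t^i|^q]<\infty$, so after taking $L^{q/2}$ norms one can apply Jensen's inequality $(\sum_i a_i)^{q/2}\le n^{q/2-1}\sum_i a_i^{q/2}$ together with the Rosenthal-type bound for sums of orthogonal martingales driven by independent noises to obtain
\[
\sup_{t,\phi}\Ex\big[|B_\phi^n(t)|^q+|J_\phi^n(t)|^q\big]^{1/q}\le C_q\,n^{-(1-1/q)}.
\]
Summing the three contributions yields the claimed bound.

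The main obstacle is the simultaneous use of three delicate ingredients: the exponential-in-$p$ Lipschitz estimate of Lemma \ref{lem:partialx} (whose constant must be shown to remain uniform across all marks $p_i$), the Fournier--Guillin Wasserstein rate on the product space $E=\R_+\times\R^m$ with mixed i.i.d./deterministic initial atoms, and the Rosenthal/BDG step for the martingale remainders, which requires first establishing uniform $L^q$ moment bounds on the $n$-particle states despite the mean-field coupling preventing their independence. A secondary subtlety is ensuring that the martingale bound is genuinely $n^{-(1-1/q)}$ with constants independent of $n$; this is where the orthogonality of the driving noises $(W^i,N^i)$ must be leveraged to reduce the $q/2$-moment of the sum of quadratic variations to uniform individual moment bounds through Jensen's inequality.
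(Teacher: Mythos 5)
Your proposal follows essentially the same route as the paper: the identical decomposition from Lemma \ref{lem:weakconverge}, a Fournier--Guillin-type empirical-measure rate for the initial term $\left<\mu_0^n-\mu_0,P_{0,t}\phi\right>$ (the paper simply invokes Theorem 2 of Ref.\cite{bo2022probabilistic}) yielding $\gamma(\kappa,q,m,n)$, and the Burkholder--Davis--Gundy inequality combined with the gradient bound of Lemma \ref{lem:partialx}, the growth condition {\Aghgrow}, Jensen's inequality and uniform $L^q$ moment bounds (via Gronwall) for the Brownian and compensated-Poisson terms. The only noticeable difference is your explicit Rosenthal-type step, which strictly delivers $n^{-1/2}+n^{-(1-1/q)}$ rather than $n^{-(1-1/q)}$ alone for the martingale pieces, but this is harmless for the stated conclusion since $\gamma(\kappa,q,m,n)\gtrsim n^{-1/2}$ in every regime of \eqref{def:gamma}.
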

    
\begin{proof}
By Lemma \ref{lem:weakconverge}, it holds for all $\phi\in {\cal R}_1$ and $0\leq t\leq T$ that
\begin{align}
    &\left<\mu_t^n-\mu_t,\phi\right>\nonumber\\
    =&\left<\mu_0^n-\mu_0,P_{0,t}\phi\right>+\frac{1}{n}\sum_{i=1}^{n}\int_0^t\partial_x P_{s,t}\phi(p_i,X_s^i)\overline{g}(s,X_t^i,\frac{1}{n}\sum_{j=1}^{n}\beta(X_s^i,X_s^j))dW_s^i\nonumber\\
    &+\frac{1}{n}\sum_{i=1}^{n}\int_0^t (P_{s,t}\phi(p_i,X_{s-}^i+h(s,X_{s-}^i))-P_{s,t}\phi(p_i,X_{s-}^i))dM_s^i\nonumber\\
    =:&I_1(t)+I_2(t)+I_3(t).
\end{align}
Then we have that
\begin{align}
    &\Ex\left[\left|\left<\mu_t^n-\mu_t,\phi\right>\right|^q\right]
    =\Ex\left[\left|I_1(t)+I_2(t)+I_3(t)\right|^q\right]\nonumber\\
    \leq &C_q\left(\Ex\left[\left|I_1(t)\right|^q\right]+\Ex\left[\left|I_2(t)\right|^q\right]+\Ex\left[\left|I_3(t)\right|^q\right]\right).
\end{align}
In what follows, we will give the estimate of $\Ex\left[\left|I_1(t)\right|^q\right]$, $\Ex\left[\left|I_2(t)\right|^q\right]$ and $\Ex\left[\left|I_3(t)\right|^q\right]$. One is reminded that during the following proof, the constant $C_q$ can be different from line to line. 

For the first term, by using Theorem 2 in Ref.\cite{bo2022probabilistic}, we have
\begin{align}\label{i1}
    \Ex\left[\left|I_1(t)\right|^q\right]\leq C_{q,\kappa,m}\gamma(\kappa,q,m,n),
\end{align}
where $\gamma(\kappa,q,m,n)$ is defined as in \eqref{def:gamma}. (For this estimate, Ref.\cite{fournier2015rate} can also be referenced.) 

For the second term, noting that $\phi\in {\cal R}_1$, by Burkholder-Davis-Gundy inequality, Lemma \ref{lem:partialx} and assumption {\Aghgrow}, it holds that
\begin{align}\label{eq:I2_1}
    &\Ex\left[\left|I_2(t)\right|^q\right]=\Ex\left[\left|\frac{1}{n}\sum_{i=1}^{n}\int_0^t\partial_x P_{s,t}\phi(p_i,X_s^i)\overline{g}\left(s,X_s^i,\frac{1}{n}\sum_{j=1}^{n}\beta(X_s^i,X_s^j)\right)dW_s^i\right|^q\right]\nonumber\\
    \leq&\Ex\left[\sup_{t'\in[0,t]}\left|\frac{1}{n}\sum_{i=1}^{n}\int_0^{t'}\partial_x P_{s,t'}\phi(p_i,X_s^i)\overline{g}\left(s,X_s^i,\frac{1}{n}\sum_{j=1}^{n}\beta(X_s^i,X_s^j)\right)dW_s^i\right|^q\right]\nonumber\\
    \leq&\Ex\left[\Bigg(\frac{1}{n}\sum_{i=1}^{n}\int_0^t\left|\partial_x P_{s,t}\phi(p_i,X_s^i)\right|^2\left|\overline{g}\left(s,X_s^i,\frac{1}{n}\sum_{j=1}^{n}\beta(X_s^i,X_s^j)\right)\right|^2ds\Bigg)^{\frac{q}{2}}\right]\nonumber\\
    \leq&\Ex\left[\left(\frac{1}{n}\sum_{i=1}^{n}\int_0^t{\rm e}^{Cp_i t}\left|\overline{g}\left(s,X_s^i,\frac{1}{n}\sum_{j=1}^{n}\beta(X_s^i,X_s^j)\right)\right|^2ds\right)^{\frac{q}{2}}\right]\nonumber\\
    \leq&\frac{C_q}{n^q}\Ex\left[\left(\sum_{i=1}^{n}\int_0^t\left|\overline{g}\left(s,X_s^i,\frac{1}{n}\sum_{j=1}^{n}\beta(X_s^i,X_s^j)\right)\right|^2ds\right)^{\frac{q}{2}}\right]\nonumber\nonumber\\
    \leq&\frac{C_q}{n^q}\sum_{i=1}^{n}\Ex\Bigg[\Bigg(\int_0^t\left|\overline{g}\left(s,X_s^i,\frac{1}{n}\sum_{j=1}^{n}\beta(X_s^i,X_s^j)\right)\right|^2ds\Bigg)^{\frac{q}{2}}\Bigg]\nonumber\\
    \leq&\frac{C_q}{n^q}\sum_{i=1}^{n}\Ex\Bigg[\Bigg(\int_0^t\left(1+|X_s^i|^2+\frac{1}{n}\sum_{j=1}^{n}|X_s^j|^2\right)ds\Bigg)^{\frac{q}{2}}\Bigg].
\end{align}
Here we have used the convergence of $(p_i)_{i\geq 1}$, which leads to the existence of a constant $C'>0$ such that ${\rm e}^{Cp_i}<C'$ for all $i\geq 1$. In addition, since $\frac{q}{2}\geq1$, applying Jensen's inequality twice to the convex function $a(x)=x^{\frac{q}{2}}$ leads to
\begin{align}
    &\left(\int_0^t\left(1+|X_s^i|^2+\frac{1}{n}\sum_{j=1}^{n}|X_s^j|^2\right)ds\right)^{\frac{q}{2}}\nonumber\\
    =&3^{\frac{q}{2}}\left(\frac{t+\int_0^t|X_s^i|^2ds+\left(\frac{1}{n}\sum_{j=1}^{n}\int_0^t|X_s^j|^2ds\right)}{3}\right)^{\frac{q}{2}}\nonumber\\
    \leq&3^{\frac{q}{2}-1}\Bigg(t^{\frac{q}{2}}+\left(\int_0^t|X_s^i|^2ds\right)^{\frac{q}{2}}+\left(\frac{\sum_{j=1}^{n}\int_0^t|X_s^j|^2ds}{n}\right)^{\frac{q}{2}}\Bigg)\nonumber\\
    \leq&3^{\frac{q}{2}-1}\Bigg(t^{\frac{q}{2}}+\left(\int_0^t|X_s^i|^2ds\right)^{\frac{q}{2}}+\frac{1}{n}\sum_{j=1}^{n}\left(\int_0^t|X_s^j|^2ds\right)^{\frac{q}{2}}\Bigg).
\end{align}
Summing over $i=1,2,\ldots,n$, multiplying by $\frac{C_q}{n^q}$ and taking expectation, it follows that
\begin{align}\label{eq:I2_2}
    &\frac{C_q}{n^q}\sum_{i=1}^{n}\Ex\Bigg[\Bigg(\int_0^t\left(1+|X_s^i|^2+\frac{1}{n}\sum_{j=1}^{n}|X_s^j|^2\right)ds\Bigg)^{\frac{q}{2}}\Bigg]\nonumber\\
    \leq&\frac{C_q}{n^q}\sum_{i=1}^{n}\Ex\Bigg[3^{\frac{q}{2}-1}\Bigg(t^{\frac{q}{2}}+\left(\int_0^t|X_s^i|^2ds\right)^{\frac{q}{2}}+\frac{1}{n}\sum_{j=1}^{n}\left(\int_0^t|X_s^j|^2ds\right)^{\frac{q}{2}}\Bigg)\Bigg]\nonumber\\
    \leq&\frac{C_q}{n^q}\sum_{i=1}^{n}\Ex\Bigg[t^{\frac{q}{2}}+2\left(\int_0^t|X_s^i|^2ds\right)^{\frac{q}{2}}\Bigg]\nonumber\\
    \leq&\frac{C_q}{n^q}\sum_{i=1}^{n}\Ex\Bigg[1+\left(\int_0^t|X_s^i|^2ds\right)^{\frac{q}{2}}\Bigg].
\end{align}
Hence in view of \eqref{eq:I2_1} and \eqref{eq:I2_2}, we have
\begin{align}\label{eq:I2_3}
    &\Ex\left[\left|I_2(t)\right|^q\right]\leq\frac{C_q}{n^q}\sum_{i=1}^{n}\Ex\Bigg[\Bigg(\int_0^t\left(1+|X_s^i|^2+\frac{1}{n}\sum_{j=1}^{n}|X_s^j|^2\right)ds\Bigg)^{\frac{q}{2}}\Bigg]\nonumber\\
    \leq&\frac{C_q}{n^q}\sum_{i=1}^{n}\Ex\Bigg[1+\left(\int_0^t|X_s^i|^2ds\right)^{\frac{q}{2}}\Bigg]\nonumber\\
    =&\frac{C_q}{n^{q-1}}\left(1+\frac{1}{n}\sum_{i=1}^{n}\Ex\Bigg[\left(\int_0^t|X_s^i|^2ds\right)^{\frac{q}{2}}\Bigg]\right).
\end{align}
By Minkowski's integral inequality\textsuperscript{\cite{hardy1952inequalities}} for $\frac{q}{2}\geq1$, we know that
\begin{align}\label{eq:I2_4}
    \Ex\Bigg[\left(\int_0^t|X_s^i|^2ds\right)^{\frac{q}{2}}\Bigg]\leq\Bigg(\int_0^t\left(\Ex\left[|X_s^i|^q\right]\right)^{\frac{2}{q}}ds\Bigg)^{\frac{q}{2}}.
\end{align}
Then through an application of Gronwall's lemma together with assumption {\Ai}, one can verify that there exists a constant $C_{T,q}>0$ such that
\begin{align}\label{eq:I2_5}
    \Ex\left[|X_s^i|^q\right]\leq C_{T,q}\left(1+\Ex\left[|X_0^i|^q\right]\right)=C_{T,q}\left(1+\Ex\left[|X_0^1|^q\right]\right),
\end{align}
see Theorem 67 of Ref.\cite{philip2013stochastic} for the details. Therefore, \eqref{eq:I2_3}, \eqref{eq:I2_4} and \eqref{eq:I2_5} yield that
\begin{align}\label{i2}
    \Ex\left[\left|I_2(t)\right|^q\right]\leq\frac{C_q}{n^{q-1}}\left(1+\frac{1}{n}\sum_{i=1}^{n}\Ex\Bigg[\left(\int_0^t|X_s^i|^2ds\right)^{\frac{q}{2}}\Bigg]\right)\leq\frac{C_q}{n^{q-1}}\left(1+\Ex\left[|X_0^1|^q\right]\right).
\end{align}

For the third term, using the mean value theorem to $P_{s,t}\phi(p_i,\cdot)$ with $\xi_{s-}^i\in[0, h(t,X_{s-}^i)]$ for all $s\in(0,t]$, with the help of Lemma \ref{lem:propagator}, Lemma \ref{lem:partialx}, assumption {\Aghgrow}, the boundedness of $(p_i)_{i\geq1}$ and $\psi$, and by some analogous methods as in the previous one, it holds that
\begin{align}\label{i3}
    & \Ex\left[\left|I_3(t)\right|^q\right]=\Ex\left[\left|\frac{1}{n}\sum_{i=1}^{n}\int_0^t P_{s,t}\phi(p_i,X_{s-}^i+h(s,X_{s-}^i))-P_{s,t}\phi(p_i,X_{s-}^i)dM_s^i\right|^q\right]\nonumber\\
    =&\Ex\left[\left|\frac{1}{n}\sum_{i=1}^{n}\int_0^t \partial_x P_{s,t}\phi(p_i,X_{s-}^i+\xi_{s-}^i)h(t,X_{s-}^i)dM_s^i\right|^q\right]\nonumber\\
    \leq&\Ex\left[\sup_{t'\in[0,t]}\left|\frac{1}{n}\sum_{i=1}^{n}\int_0^{t'}\partial_x P_{s,t'}\phi(p_i,X_{s-}^i+\xi_{s-}^i)h(s,X_{s-}^i)dM_s^i\right|^q\right]\nonumber\\
    \leq&\Ex\left[\left|\frac{1}{n}\sum_{i=1}^{n}\int_0^{t} \left(\partial_x P_{s,t}\phi(p_i,X_{s-}^i+\xi_{s-}^i)\right)^2h(s,X_{s-}^i)^2d\left<M^i\right>_s\right|^{\frac{q}{2}}\right]\nonumber\\
    \leq&\Ex\left[\left(\frac{1}{n}\sum_{i=1}^{n}\int_0^{t}\left|\partial_x P_{s,t}\phi(p_i,X_{s-}^i+\xi_{s-}^i)\right|^2\left|h(s,X_{s-}^i)\right|^2p_i \psi\left(s,\frac{1}{n}\sum_{j=1}^{n}X_{s}^j\right)ds\right)^{\frac{q}{2}}\right]\nonumber\\
\leq&C_q\Ex\left[\left(\frac{1}{n}\sum_{i=1}^{n}\int_0^{t}\left|h(s,X_{s-}^i)\right|^2ds\right)^{\frac{q}{2}}\right]\nonumber\\
    \leq&\frac{C_q}{n^q}\sum_{i=1}^{n}\Ex\Bigg[\Bigg(\int_0^t\left|h(s,X_{s-}^i)\right|^2 ds \Bigg)^{\frac{q}{2}}\Bigg]\nonumber\\
    \leq&\frac{C_q}{n^q}\sum_{i=1}^{n}\Ex\Bigg[\Bigg(\int_0^t\left(1+|X_{s-}^i|^2\right)ds \Bigg)^{\frac{q}{2}}\Bigg]\nonumber\\
    \leq&\frac{C_q}{n^q}\sum_{i=1}^{n}\Ex\Bigg[ 1+\left(\int_0^t|X_{s}^i|^2ds\right) ^{\frac{q}{2}}\Bigg]\nonumber\\
    =&\frac{C_q}{n^{q-1}}\left(1+\frac{1}{n}\sum_{i=1}^{n}\Ex\Bigg[\left(\int_0^t|X_s^i|^2ds\right)^{\frac{q}{2}}\Bigg]\right)\nonumber\\
    \leq&\frac{C_q}{n^{q-1}}\left(1+\Ex\left[\left|X_0^1\right|^q\right]\right).
\end{align}
Note that $\Ex\left[\left|X_0^1\right|^q\right]<\infty$, then in view of \eqref{metric}, \eqref{metric_BL}, \eqref{i1}, \eqref{i2} and \eqref{i3}, we complete the proof.
\end{proof}

\section*{Conflict of Interest}

The author declares that he has no conflict of interest.

\section*{Biographies}

Zeqian Li is currently a postgraduate student under the tutelage of Prof. Lijun Bo at the School of Mathematics, University of Science and Technology of China. His research interests focus on mean-field analysis.

% \section*{Preprint Statement (if applicable)}

%% -- Reference
%% -- for bibtex
\nocite{*}
\bibliographystyle{custom-unsrt}
\bibliography{reference.bib}

\end{document}